\documentclass[12pt]{amsart}
\usepackage[utf8]{inputenc}
\usepackage[margin=1in]{geometry}
\usepackage{enumerate}
\usepackage{amsmath}
\usepackage{amssymb}
\usepackage{amsthm}
\usepackage{amsfonts}
\usepackage{xcolor}
\usepackage{comment}

\usepackage{hyperref}
\usepackage{url}

\usepackage{cleveref}
\usepackage{empheq}
\usepackage{graphicx}
\usepackage{tikz}
\usetikzlibrary{calc}
\usepackage{cite}
\hypersetup{
	colorlinks   = true,    
	urlcolor     = red,    
	linkcolor    = blue,    
	citecolor    = red      
}
\usepackage{soul}
\setul{0.2ex}{}
\usepackage{tikz-cd}

\newcommand{\Ima}{\text{Im}}

\newcommand{\Supp}{\text{Supp}}

\newcommand{\GL}{\text{GL}}
\newcommand{\SL}{\text{SL}}
\newcommand{\R}{\mathbb{R}}
\renewcommand{\P}{\mathbb{P}}

\newcommand{\N}{\mathbb{N}}

\newcommand{\F}{\mathbb{F}}
\newcommand{\Z}{\mathbb{Z}}
\newcommand{\C}{\mathbb{C}}
\newcommand{\Q}{\mathbb{Q}}

\newcommand{\Tr}{\mbox{Tr}}
\newcommand{\Prob}{\mbox{Prob\,}}

\newcommand{\eps}{\varepsilon}

\theoremstyle{plain}

\newtheorem{theo}{Theorem}
\newtheorem{coro}{Corollary}
\newtheorem{lemm}{Lemma}
\newtheorem{prop}{Proposition}

\theoremstyle{definition}
\newtheorem{defin}{Definition}
\newtheorem{rem}{Remark}
\newtheorem{exem}{Example}

\crefname{theo}{theorem}{Theorems}

\title{The one percent property for free groups}

\author{Emmanuel Breuillard}
\address{E. Breuillard, Mathematical Institute,
University of Oxford,
Andrew Wiles Building,
Woodstock Road,
Oxford OX2 6GG}
\email{emmanuel.breuillard@maths.ox.ac.uk}

\author{Ferdinand Jacobé de Naurois}
\address{F. Jacobé de Naurois, Mathematical Institute,
University of Oxford,
Andrew Wiles Building,
Woodstock Road,
Oxford OX2 6GG}
\email{ferdinand.jacobedenaurois@maths.ox.ac.uk}

\begin{document}

	\maketitle
	\begin{abstract}
		We prove that for every $\eps >0$, there is $\delta >0$ such that for every probability measure $\mu$ on a free group $\F$, if the operator norm of the action of $\mu$ on $L^2\F$ by convolution exceeds $\eps$, then there is a coset of an abelian subgroup of $\F$ with $\mu$-mass at least $\delta$.
	\end{abstract}

	\bigskip
	\bigskip
	
	Consider a discrete group $\Gamma$, with the Hilbert space $L^2\Gamma$ of $L^2$ functions from $\Gamma$ to $\R$, endowed with the standard scalar product:
	$$\langle f,g\rangle  = \sum_{x\in \Gamma}f(x)g(x).$$
	 Let $(\lambda, L^2\Gamma)$ be the left regular representation of $\Gamma$, that is the unitary representation, where $\lambda(g) : L^2\Gamma\to L^2\Gamma$ is the unitary operator defined by $\lambda(g)(f) : x\mapsto f(g^{-1}x)$. For a probability measure $\mu$ on $\Gamma$ and  every $f\in L^2\Gamma$, set:
	\begin{align*}
	\mu * f : \Gamma &\to \R\\
	x&\mapsto \sum_{g\in \Gamma}\mu(g)f(g^{-1}x).
	\end{align*}
	The function $\mu*f$ is still in $L^2\Gamma$, and the map $\lambda(\mu) : L^2\Gamma\to L^2\Gamma$ sending $f$ to $\mu*f$ is a bounded operator, whose norm we denote by $\|\lambda(\mu)\|$. Note that $\|\lambda(\mu)\|\leq 1$. When $\mu$ is symmetric (that is $\mu(g^{-1})=\mu(g)$ for each $g \in \Gamma$), the operator $\lambda(\mu)$ is self-adjoint. In this case, its norm coincides with its spectral radius.
	
	Kesten \cite{kesten} showed that for symmetric $\mu$, \begin{equation}\label{kest} \lim_{n \to +\infty} \mu^{2n}(1)^{1/2n} =  \sup_{n \ge 1} \mu^{n}(1)^{1/n} = \|\lambda(\mu)\|\end{equation} where $1$ is the identity of the group $\Gamma$, and moreover $\|\lambda(\mu)\|=1$ if and only if the support of $\mu$ generates an amenable subgroup of $\Gamma$. The symmetry condition cannot be omitted: e.g. in the free group on two generators $a$ and $b$ the measure $\mu = \frac{1}{2}(\delta_a+\delta_b)$ has norm $1$.  One may however often reduce to the symmetric case by considering the symmetrized measure $\widetilde{\mu}:=\mu*\mu^*$, where $\mu^*(g):=\mu(g^{-1})$, since $\|\lambda(\widetilde{\mu})\|=\|\lambda(\mu)\|^2$.

	In this text, we are mostly interested in the other regime, that is when $\|\lambda(\mu)\|$ approaches zero. We begin with the following simple observation:
    
	\begin{prop}		
		For every probability measure $\mu$ on $\Gamma$, every amenable subgroup $A$ in $\Gamma$ and every $g\in \Gamma$, $\|\lambda(\mu)\|\geq \mu(gA)$.
	\end{prop}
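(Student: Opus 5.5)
We use Følner sets of $A$ as test functions. Since $A$ is amenable, for every $\eps>0$ there is a finite nonempty $F\subset A$ with $|aF\,\triangle\, F|\le \eps |F|$ for all $a$ in any prescribed finite subset of $A$. Take $f=\mathbf 1_{gF}$, the indicator of a translate of $F$, and $h=\mathbf 1_{F}$, the indicator of $F$ itself. Both have $L^2$-norm $|F|^{1/2}$. We estimate the pairing $\langle \mu*\mathbf 1_F, \mathbf 1_{gF}\rangle$ from below.

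First, reduce to finitely supported $\mu$ on $gA$. All terms of the convolution are nonnegative, because $\mu\ge 0$ and the indicators are nonnegative. Hence
$$\|\lambda(\mu)\|\,|F|\ \ge\ \langle \mu*\mathbf 1_F,\mathbf 1_{gF}\rangle=\sum_{x\in\Gamma}\mu(x)\,|xF\cap gF|\ \ge\ \sum_{x\in S}\mu(x)\,|xF\cap gF|$$
for any finite set $S\subset gA$. Choose such an $S$ with $\mu(S)\ge \mu(gA)-\eps$, which is possible since $\mu$ is a probability measure and hence countably additive.

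Next, apply the Følner property. Each $x\in S$ can be written $x=ga_x$ with $a_x\in A$, and then $|xF\cap gF|=|a_xF\cap F|\ge (1-\eps)|F|$, where $F$ is chosen Følner for the finite set $\{a_x : x\in S\}$. This gives $\|\lambda(\mu)\|\ge (1-\eps)(\mu(gA)-\eps)$. Letting $\eps\to 0$ yields $\|\lambda(\mu)\|\ge\mu(gA)$.

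There is no real obstacle. The only points needing care are the direction of the translation in the convolution, namely $(\mu*\mathbf 1_F)(y)=\sum_x \mu(x)\mathbf 1_{xF}(y)$, so that left translates appear and $F$ must be a left Følner set, and the order of choices: $S$ is fixed first, and $F$ is chosen afterwards, Følner relative to $S$.
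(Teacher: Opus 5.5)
Your proof is correct. It differs from the paper's mainly in how the amenability of $A$ is used. The paper first reduces to $g=1$ by replacing $\mu$ with $\delta_{g^{-1}}*\mu$. It then invokes the operator-theoretic form of amenability: for $\nu=\mu_{|A}/\mu(A)$ one has $\|\lambda(\nu)\|=1$, so there is a near-norming vector $\xi\in L^2A$. This $\xi$ may be taken nonnegative since $|\nu*\xi|\le \nu*|\xi|$, and positivity then gives $\|\mu*\xi\|\ge\|\mu_{|A}*\xi\|\ge(1-\delta)\mu(A)\|\xi\|$. You instead apply F{\o}lner's criterion directly, with explicit indicator test functions. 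You bound a matrix coefficient $\langle\lambda(\mu)\mathbf 1_F,\mathbf 1_{gF}\rangle$ rather than a norm. Pairing against $\mathbf 1_{gF}$ absorbs the translation by $g$, so no reduction to $g=1$ is needed. The price is the truncation of $\mu_{|gA}$ to a finite set $S$, chosen before $F$, since F{\o}lner sets are only almost invariant under finitely many elements. The paper's black box $\|\lambda(\nu)\|=1$ instead handles an arbitrary countably supported $\nu$ on $A$ at once. In return, your argument is self-contained from the F{\o}lner condition, whereas the paper's is shorter but relies on the norm-one characterization, which is itself usually proved via F{\o}lner sets. Both proofs rest on the same positivity principle to discard the mass of $\mu$ lying outside the coset.
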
 
	
	We include the proof here, since it is very short:
	
	\begin{proof}
		 Replacing $\mu$ by $\delta_{g^{-1}}*\mu$ does not change the operator norm, so we may assume that $g = 1$. Let $\delta >0$ and $\nu := \frac{\mu_{|A}}{\mu(A)}$. We pick a non-zero $\xi\in L^2A\subseteq L^2\Gamma$ such that $\|\nu * \xi\|\geq( 1-\delta)\|\xi\|$. Write $\xi'(x)  =|\xi(x)|$ for every $x\in \Gamma$. Then $\|\xi'\| = \|\xi\|$ and $\|\nu*\xi'\|\geq \|\nu*\xi\|$, hence we may assume that $\xi\ge 0$. It follows that $\|\mu*\xi\|\geq \|\mu_{|A}*\xi\| \geq \mu(A)(1-\delta)\|\xi\|$. Hence upon letting $\delta\to 0$ we get $\|\lambda(\mu)\|\geq \mu(A)$ as desired.
		\end{proof}
	
	Our goal is to prove a converse to the above proposition for a certain class of groups, including free groups. We introduce the following terminology:
	\begin{defin}[One percent property] \label{defin one percent}
		Say a discrete group $\Gamma$ has the \emph{one percent property} if and only if for every $\varepsilon >0$, there is $\delta>0$ such that the following holds:		
			 For every probability measure $\mu$ on $\Gamma$ with $\|\lambda(\mu)\| > \varepsilon$, there is an amenable subgroup $A$ of $\Gamma$ and $g\in \Gamma$, such that $\mu(gA)\geq \delta$.

	\end{defin}
	Of course, in the above definition, $\delta$ is allowed to depend on $\varepsilon$ but not on the specific probability measure $\mu$. Our main result is as follows:
	\begin{theo}\label{main theorem}
		Free groups have the one percent property.
	\end{theo}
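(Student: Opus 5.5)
We argue by contradiction, reducing first to a statement about finitely supported symmetric measures and return probabilities. In a free group $\F$ the amenable subgroups are exactly the cyclic subgroups, so the target is a coset $g\langle h\rangle$ of $\mu$-mass $\geq\delta$. By \eqref{kest} applied to the symmetrization $\widetilde\mu=\mu*\mu^*$, and since $\|\lambda(\widetilde\mu)\|=\|\lambda(\mu)\|^2>\eps^2$, we get $\widetilde\mu^{2n}(1)\geq \eps^{4n}$ for every $n$. We also truncate $\mu$ to a finite set carrying all but a tiny fraction of its mass; this changes the norm only slightly, so we may assume $\mu$ is finitely supported. The goal then becomes the following: if every coset of every cyclic subgroup has $\mu$-mass $<\delta$, then $\|\lambda(\mu)\|\leq\eps$, where $\delta=\delta(\eps)$ is independent of the support.

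The main tool is a quantitative ping-pong, or ``free basis extraction'', estimate. The model case is a measure uniform on $k$ elements $x_1,\dots,x_k$ that freely generate a free subgroup. There, Kesten/Akemann--Ostrand type bounds give $\|\lambda(\mu)\|\leq 2\sqrt{k-1}/k+O(1/k)$ up to constants. More generally, if $\mu=\sum p_i\delta_{x_i}$ with the $x_i$ free, then $\|\lambda(\mu)\|\lesssim \max_i p_i^{1/2}$ plus a tail term; Haagerup's inequality gives this directly.

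The plan is therefore a Ramsey-style step. Given a measure with no heavy cyclic coset, decompose it into a bounded number of pieces, each spread over an almost-free family. Concretely, we work with reduced words and Nielsen reduction. Write each $x$ in the support as $x=u\,c\,u'$, and group elements according to their first and last few letters. The hypothesis that no cyclic coset $g\langle h\rangle$ carries mass $\delta$ is used to show that, after discarding a small mass, we can find a subset $S$ of the support with $\mu(S)\geq 1-\eps/2$. This set $S$ should be partitioned into $O_\eps(1)$ classes, each class being a ``Schottky'' family: cancellation in products $x_i^{\pm1}x_j^{\pm1}$ stays below half the length of each word, so that products of class elements are reduced enough that the Haagerup/Kesten bound applies with each individual weight $\leq\delta$. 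The triangle inequality over classes then bounds the norm by $O_\eps(1)\cdot C\sqrt{\delta}+\eps/2<\eps$ once $\delta$ is small.

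The main obstacle is this combinatorial extraction, and in particular the interaction of elements that share long common prefixes and suffixes. Two elements $x,y$ with $x^{-1}y$ short behave nearly like elements of the same coset. Iterating this relation leads to elements clustered around a single coset $g\langle h\rangle$, and that cluster is precisely where the hypothesis has to bite. To handle it we would use a scale-by-scale argument on the Cayley tree. At each scale, either the measure disperses over many branches, which yields free-type decay by an $\ell^2$ spectral-gap estimate on the tree, or it concentrates along a geodesic. Concentration along a geodesic means concentration along an axis of some $h$, and a pigeonhole argument then gives a coset $g\langle h\rangle$ of mass at least $\delta$. Making this dichotomy uniform in the lengths of the words is the hard part. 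We expect to need an induction on the number of scales, with losses controlled multiplicatively so that they depend only on $\eps$.
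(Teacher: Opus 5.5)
\textbf{There is a genuine gap.} Everything rests on the ``combinatorial extraction'' and the scale-by-scale dichotomy. These are not proved, and as you formulate them they are false.

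First, a set $S$ with $\mu(S)\ge 1-\eps/2$ cannot in general be split into $O_\eps(1)$ Schottky families. Take $\mu$ uniform on the sphere $S_R$ of radius $R$ in $\F_2$. Every cyclic coset $g\langle h\rangle$ meets $S_R$ in at most three points, so your hypothesis holds for any fixed $\delta$ once $R$ is large. But two words of $S_R$ sharing a prefix of length $\ge R/2$ cannot lie in one Schottky class. There are only $4\cdot 3^{\lceil R/2\rceil-1}$ such prefixes, so any such $S$ needs $\gtrsim 3^{R/2}$ classes. With $m$ classes, Haagerup plus the triangle inequality give only $\|\lambda(\mu)\|\le 2\sqrt{m\delta}+\eps/2$, which is useless when $m$ is unbounded.

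Second, the implication ``concentration along a geodesic $\Rightarrow$ concentration along an axis of some $h$ $\Rightarrow$ heavy coset'' fails. Let $\mu$ be uniform on the prefixes $w_1,\dots,w_N$ of an infinite square-free positive word in $\F_3=\langle a,b,c\rangle$. These lie on one geodesic ray. Suppose three of them lay in $g\langle h\rangle$. The two segments between them would be $c^p$ and $c^q$ with $c=h^{\pm1}$ cyclically reduced and $p,q\ge 1$, so the word would contain the square $cc$. Hence every cyclic coset has mass $\le 2/N$. In that branch you must prove that the norm is small, and nothing in your plan does this. The uniformity in word lengths that you call ``the hard part'' is essentially the whole theorem. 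Also, the bound $\widetilde\mu^{2n}(1)\ge\eps^{4n}$ you record at the start is never used.

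\textbf{The missing idea} is to turn the analytic hypothesis into a uniform algebraic one, and to locate the heavy coset directly rather than through a contrapositive decomposition of $\mu$.
\begin{enumerate}[(i)]
\item The paper uses Ozawa's uniform exactness of free groups, in the form of locality of the spectral radius: $\widetilde\nu^n(1)^{1/2n}\ge\frac12\|\lambda(\nu)\|$ for $n\ge M_{1/2,s}$, where $M_{1/2,s}$ depends only on the support size $s$.
\item Comparing with the free value $b_s=2\sqrt{s-1}/s$: if the uniform measure on $x_1,\dots,x_s$ has norm $>2b_s$, the $x_i$ satisfy a nontrivial relation of length $\le L=2M_{1/2,s}$, however long the $x_i$ are.
\item Choose unit vectors $v,w$ with $\langle\lambda(\mu)v,w\rangle>\eps$, and $s$ with $b_s<\eps/4$. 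The set $\Omega=\{g:\langle\lambda(g)v,w\rangle>2b_s\}$ then has $\mu(\Omega)>\eps/2$. Every $s$-tuple from $\Omega$ has norm $>2b_s$, so $\Omega$ is $(s,L)$-related.
\item An algebraic argument covers $\Omega$ by at most $c(s,L)$ cyclic cosets. It combines a Tits alternative for irreducible subvarieties of $\SL_2(\C)$ applied to the Zariski closure of $\Omega$, a Bezout bound on the number of its components, and Lyndon--Sch\"utzenberger to pass from $p_k(\Omega_i)$ lying in a coset to $\Omega_i$ lying in one.
\item Pigeonhole then gives a cyclic coset of mass $\ge\eps/2c(s,L)$.
\end{enumerate}
No geometric analysis of $\mu$ on the Cayley tree is needed.
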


Since free groups of arbitrary countable rank arise as subgroups of any nonabelian free group and since the one-percent property is clearly stable under passing to a subgroup,  we see that the same modulus $\delta=\delta(\varepsilon)>0$ works for all free groups simultaneously. We discuss an explicit determination of a modulus  in \Cref{explicit} below.

\bigskip
 Although we will not use this fact, we note and it is not hard to see that the one percent property is stable under the direct product of two groups (but not of an infinite number of groups, see \Cref{resfree}). It is also easily seen that amenable-by-one-percent groups are one-percent (with the same modulus). However, by contrast with  C*-simplicity \cite{bkko} or uniform exactness \cite{ozawa2026uniform}, the example of Baumslag-Solitar groups (see \Cref{BS}), which are free-by-solvable, shows that the one percent property is not stable under group extension.

We describe now another way in which the property can be inherited.  Given two groups $\Gamma$ and $L$, recall that $L$ is said to be fully residually $\Gamma$ if for every finite subset $S$ of $L$, there is a group homomorphism $\varphi : L\to\Gamma$, which is injective on $S$.
 
	\begin{prop}\label{frf}		Let $\Gamma$ be a discrete group with the one percent property. Assume that all amenable subgroups of $\Gamma$ are abelian.	If $L$ is fully residually $\Gamma$, then $L$ too has the one percent property (with the same modulus).
	\end{prop}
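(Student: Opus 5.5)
The plan is to reduce to finitely supported measures, express the condition $\|\lambda(\mu)\|>\varepsilon$ through finitely many return probabilities via Kesten's formula \eqref{kest}, and push $\mu$ forward along a homomorphism $\varphi : L\to\Gamma$ that is injective on a suitable finite set. One then applies the one percent property in $\Gamma$ and pulls the resulting coset back to $L$. The hypothesis that amenable subgroups of $\Gamma$ are abelian is what lets the pulled-back coset be recognised as a coset of an abelian subgroup of $L$.

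\textbf{Step 1 (finite support).} Let $\mu$ be a probability measure on $L$ with $\|\lambda(\mu)\|>\varepsilon$. For a finite set $F\subseteq L$ put $\mu_F:=\mu_{|F}/\mu(F)$. Since $\|\lambda(\mu)-\lambda(\mu_F)\|\le \|\mu-\mu_F\|_{\ell^1}\le 2(1-\mu(F))$, we can choose $F$ with $\mu(F)\ge 1-\eta$ and still have $\|\lambda(\mu_F)\|>\varepsilon$. If $\mu_F(xB)\ge\delta$ for some coset $xB$, then $\mu(xB)\ge(1-\eta)\delta$. So it suffices to treat finitely supported $\mu$, at the cost of an arbitrarily small loss in $\delta$.

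\textbf{Step 2 (finitely many words).} Let $\mu$ be finitely supported and set $\nu:=\mu*\mu^*$. This measure is symmetric and $\|\lambda(\nu)\|=\|\lambda(\mu)\|^2>\varepsilon^2$. By \eqref{kest} there is some $n$ with $\nu^n(1)>\varepsilon^{2n}$. Let $S$ be the finite set consisting of:
\begin{itemize}
\item all products of at most $2n$ elements of $\Supp\mu\cup(\Supp\mu)^{-1}$;
\item all commutators $[x^{-1}y,x^{-1}z]$ with $x,y,z\in\Supp\mu$;
\item the identity.
\end{itemize}
Take $\varphi$ injective on $S$ and put $\mu':=\varphi_*\mu$. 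Then $\varphi_*\nu=\mu'*\mu'^*$. Moreover $(\varphi_*\nu)^n(1)\ge\nu^n(1)$, because the words evaluating to $1$ in $L$ still evaluate to $1$ in $\Gamma$; in fact injectivity gives equality. Applying \eqref{kest} in $\Gamma$ gives $\|\lambda(\mu')\|^2\ge(\varphi_*\nu)^n(1)^{1/n}>\varepsilon^2$. The one percent property of $\Gamma$ then yields an amenable, hence abelian, subgroup $A$ and $g\in\Gamma$ with $\mu'(gA)\ge\delta$.

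\textbf{Step 3 (pulling back).} Let $X:=\Supp\mu\cap\varphi^{-1}(gA)$, so that $\mu(X)=\mu'(gA)\ge\delta$, and fix $x_0\in X$. For $y,z\in X$ the elements $\varphi(x_0^{-1}y)$ and $\varphi(x_0^{-1}z)$ lie in the abelian group $A$, so $\varphi([x_0^{-1}y,x_0^{-1}z])=1=\varphi(1)$. Since both this commutator and $1$ lie in $S$, injectivity forces $[x_0^{-1}y,x_0^{-1}z]=1$ in $L$. Hence the subgroup $B$ generated by $x_0^{-1}X$ is abelian, and in particular amenable. It satisfies $X\subseteq x_0B$, so $\mu(x_0B)\ge\delta$, as required.

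\textbf{Main obstacle.} The delicate point is the bookkeeping that gives the \emph{same} modulus. Step 1 loses a factor $1-\eta$, and Kesten's formula only turns $\|\lambda(\mu)\|>\varepsilon$ into a strict inequality at some finite stage. I would handle the $\eta$-loss by noting that it can be made arbitrarily small for a fixed $\mu$. One then argues that some single abelian coset of $L$ has mass at least $\delta$: the cosets produced by Step 3 are generated by elements of the support, and as $F$ increases one can extract a coset that works in the limit. Failing a clean limiting argument, I would state the result with every $\delta'<\delta$, which is what the application needs anyway.
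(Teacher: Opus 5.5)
Your Steps 1--3 are correct and are essentially the paper's argument. You truncate to a finitely supported measure, push forward along a $\varphi$ injective on a finite set, apply the property in $\Gamma$, and pull the coset back using commutator identities. The paper uses $[x^{-1}y,z^{-1}t]$ with $\varphi$ injective on $S^8$, and simply asserts $\|\lambda(\varphi_*\mu_1)\|\ge\|\lambda(\mu_1)\|$; your return-probability argument justifies that assertion. Note that $\varphi_*(\nu^n)(1)\ge\nu^n(1)$ holds for any homomorphism, so you do not need injectivity on the $2n$-fold products. What you have not proved is the clause ``with the same modulus''. As written, you only get a coset of mass $(1-\eta)\delta(\varepsilon)$, and that coset depends on $\eta$.

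The missing idea is that you throw away the slack $t:=\|\lambda(\mu)\|-\varepsilon>0$.
\begin{itemize}
\item Choose $F$ with $2(1-\mu(F))<t/2$, so that $\|\lambda(\mu_F)\|>\varepsilon+t/2$.
\item Apply the one percent property of $\Gamma$ at level $\varepsilon+t/2$. This gives a coset of $\mu$-mass at least $(1-\eta)\,\delta(\varepsilon+t/2)$, where $\eta=1-\mu(F)$.
\item If the modulus is strictly increasing, as the paper assumes, this is $\ge\delta(\varepsilon)$ once $\eta$ is small enough.
\end{itemize}
That is exactly how the paper closes the argument.

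Your limiting idea can also be made rigorous, provided you pass to the limit in the sets rather than in the cosets. For $\eta_j\to 0$, the pulled-back sets $X_j\subseteq\Supp\mu$ satisfy $\mu(X_j)\ge(1-\eta_j)\delta(\varepsilon)$ and $[x^{-1}y,x^{-1}z]=1$ for all $x,y,z\in X_j$. Since $\Supp\mu$ is countable, a subsequence of the indicators $1_{X_j}$ converges pointwise to some $1_Y$. Dominated convergence then gives $\mu(Y)\ge\delta(\varepsilon)$. Any three points of $Y$ lie in a common $X_j$, so the commutation relations pass to $Y$, and $Y$ lies in a coset of an abelian subgroup. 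This version does not even need monotonicity of the modulus.
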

	In view of the above proposition, \Cref{main theorem} implies:
	\begin{coro}\label{frfg}
		Fully residually free groups have the one percent property.
	\end{coro}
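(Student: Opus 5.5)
The corollary follows by combining \Cref{main theorem} with \Cref{frf}, taking $\Gamma$ to be a free group $\F$. Two hypotheses of \Cref{frf} need checking. The first, that $\F$ has the one percent property, is exactly \Cref{main theorem}. The second is that every amenable subgroup of $\F$ is abelian. By the Nielsen--Schreier theorem, every subgroup $H\le\F$ is free. A free group of rank at least $2$ contains a nonabelian free subgroup and is therefore non-amenable. Hence an amenable subgroup of $\F$ is free of rank $0$ or $1$, that is, trivial or infinite cyclic, and in particular abelian.

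Now let $L$ be fully residually free. By definition this means fully residually $\F_r$ for some free group $\F_r$, possibly of infinite rank. Any such $\F_r$ embeds in $\F_2$, so $L$ is also fully residually $\F_2$. This reduction is not even needed, since \Cref{frf} applies with $\Gamma=\F_r$ directly. Applying \Cref{frf} with $\Gamma=\F_r$ shows that $L$ has the one percent property. Moreover its modulus is that of $\F_r$, which by the remark following \Cref{main theorem} can be taken to be the uniform modulus $\delta(\varepsilon)$ valid for all free groups.

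There is no real obstacle here; the only point requiring care is the structural fact that amenable subgroups of free groups are cyclic. All the substance lies in \Cref{main theorem} and \Cref{frf}. For the latter, I expect the proof to go as follows. Given $\mu$ on $L$ with $\|\lambda(\mu)\|>\varepsilon$, truncate $\mu$ to a finitely supported measure while keeping the norm above $\varepsilon$. Use the quantity $\mu^{2n}(1)^{1/2n}$, via \eqref{kest} applied to $\mu*\mu^*$, to certify the norm on a finite set. Then push forward by a homomorphism that is injective on a large finite set, and apply the one percent property in $\Gamma$ to obtain an abelian coset of large mass there. Finally, pull that coset back: on the relevant finite set, commutation relations transfer back to $L$.
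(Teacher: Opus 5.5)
Your proposal is correct and matches the paper's derivation: the corollary is obtained by applying \Cref{frf} with $\Gamma$ a free group, using \Cref{main theorem} and the fact that amenable subgroups of free groups are abelian, which your Nielsen--Schreier argument establishes. One small correction: a free group of uncountable rank does not embed in $\F_2$, but composing $\varphi$ with a retraction onto a finite-rank free factor containing $\varphi(S)$ shows that every fully residually free group is fully residually $\F_2$, so a single target $\Gamma$ can always be fixed.
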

Note that this class of groups includes fundamental groups of orientable surfaces.
	
	\bigskip
	
	That the one-percent property for free groups might hold was first investigated in Cagri Sert's doctoral thesis \cite[Chapter 7]{sert}, where partial results were obtained in this direction. It was  stated again as an open problem in \cite[\S 1.5.9]{becker-breuillard}. We believe it likely that not only free groups but all hyperbolic groups and all finitely generated subgroups of matrices over any field have this property. In fact we believe that $\GL_d(\C)$, viewed as a discrete group, has the one-percent property. A weaker property, namely the ninety-nine percent property (see \Cref{ninety}), which is closely related to uniform non-amenability, was shown to hold in this generality in \cite[Theorem 7.6]{becker-breuillard}. 
	
	Almost two decades ago, the first author realized that the one-percent property for a Zariski-dense subgroup in a centerfree simple algebraic group would imply their C*-simplicity via a simple route  - the so-called uniform Dixmier property -- as in Bekka, Cowling and de la Harpe \cite[Proposition 3]{bch}. However, the one-percent property turned out to be harder than C*-simplicity itself, which was later established in \cite{bkko}; see also the recent \cite{vigdorovich, hartman-kalantar} for the uniform Dixmier property.

	Our proof of \Cref{main theorem} makes use of Ozawa's recent proof that free groups are uniformly exact \cite{ozawa2026uniform} along with a new kind of Tits alternative, which is of independent interest and characterizes algebraic subvarieties of $\SL_2$ admitting tuples of free group generators:

\begin{prop}[Tits alternative for varieties]\label{alternative} Suppose $\mathcal{C}$ is an irreducible algebraic subvariety of $\SL_2(\C)$. Let $s \ge 2$ be an integer. Then one of the following holds:
\begin{enumerate}[(i)]
\item there is an integer $k \ge 1$ and $v_1,v_2 \in \C^2\setminus\{0\}$ such that $x^kv_1 \in \C v_2$ for all $x \in \mathcal{C}$,
\item there are $x_1,\ldots,x_s \in \mathcal{C}$ that generate a nonabelian free subgroup of rank $s$ in $\SL_2(\C)$. 
\end{enumerate}
\end{prop}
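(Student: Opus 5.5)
We argue by contrapositive: assume (ii) fails and deduce (i). The set of $s$-tuples $(x_1,\ldots,x_s)\in\mathcal{C}^s$ that fail to generate a free group of rank $s$ is the countable union, over nontrivial reduced words $w$ in the free group $F_s$, of the subvarieties $V_w=\{w(x_1,\ldots,x_s)=1\}\cap\mathcal{C}^s$. Since $\mathcal{C}^s$ is irreducible and $\C$ is uncountable, a countable union of proper closed subvarieties cannot cover it (Baire category, or the uncountability of $\C$). Hence if (ii) fails, some single nontrivial word $w$ vanishes identically on $\mathcal{C}^s$. Then $\mathcal{C}$ satisfies a \emph{law} $w$ in $s$ variables. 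The Zariski closure $G$ of the group generated by $\mathcal{C}$ also satisfies $w$: the set of tuples in $G^s$ on which $w$ vanishes is closed and contains products of elements of $\mathcal{C}^{\pm1}$, and one extends variable by variable. So $G$ is an algebraic subgroup of $\SL_2(\C)$ satisfying a nontrivial law.

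The key step is the classification of algebraic subgroups of $\SL_2(\C)$. Any such $G$ either has identity component $G^0=\SL_2$, or $G^0$ is solvable: contained in a Borel (upper triangular) subgroup, a torus, or unipotent, and $G$ then normalizes $G^0$. If $G^0=\SL_2$, then $G$ contains free subgroups of every rank (by the Tits alternative, or directly via ping-pong). These free subgroups violate the law $w$, since a nontrivial word cannot vanish on a free group of rank $s$. So $G^0$ is solvable and $G$ is virtually solvable with $[G:G^0]=m<\infty$. We then take $k=m$, or a multiple of it, so that $x^k\in G^0$ for all $x\in\mathcal{C}$.

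Next we produce the vectors $v_1,v_2$ by a case analysis on $G^0$. If $G^0$ lies in a Borel subgroup fixing a line $\C v$, then $x^kv\in\C v$ and we take $v_1=v_2=v$. If $G^0$ is finite, then $G$ is finite, $x^k=1$ for a suitable $k$, and any $v_1=v_2$ works. The remaining possibility is that $G^0$ is a torus (diagonal in some basis $e_1,e_2$) and $G$ is its normalizer. Here $G^0$ preserves the lines $\C e_1$ and $\C e_2$, so again $x^ke_1\in\C e_1$. Since every connected solvable group is triangularizable (Lie–Kolchin), all cases give a common eigenline for $G^0$, and (i) holds with $v_1=v_2$. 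The freedom $v_1\neq v_2$ in the statement is not needed in this argument, though the author's route may produce it.

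The main obstacle I expect is the first reduction. One must justify that the uncountable union argument applies inside the irreducible variety $\mathcal{C}^s$, where each $V_w$ is Zariski closed and proper unless $w$ vanishes identically. One must also handle the passage from a law on $\mathcal{C}$ to a law on its generated Zariski-closed group. The latter can be sidestepped: $G^0=\SL_2$ implies that a generic $s$-tuple from $\mathcal{C}$ would generate a Zariski-dense subgroup. By Breuillard–Gelander-type results (or by Tits's proof that Zariski-dense subgroups contain free subgroups, applied within the closure), this contradicts the vanishing of $w$. The cleanest route is to note that a law on $\mathcal{C}$ passes to the closed subgroup generated, since $w$ vanishing on $\mathcal{C}^s$ forces it on $(\mathcal{C}\cup\mathcal{C}^{-1})^{n}$ products by the standard substitution argument for verbal identities only after enlarging the variable count. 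If that fails, I would instead argue directly with ping-pong on loxodromic elements of $\mathcal{C}$ to build the free tuple.
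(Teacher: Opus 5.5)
\textbf{There is a genuine gap.} Your first step, where Baire category gives a single nontrivial word $w$ with $w\equiv 1$ on $\mathcal{C}^s$, is exactly how the paper concludes. The gap is the next step. A law holding on the \emph{set} $\mathcal{C}$ does not pass to the Zariski closure $G$ of $\langle\mathcal{C}\rangle$. The variable-by-variable argument works for a subgroup $H$: fixing the other variables in $H$, the solution set in one variable is closed and contains $H$. For a non-group $\mathcal{C}$, however, you are never entitled to substitute products of elements of $\mathcal{C}$ into a variable.

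Here is a concrete counterexample. Take $B$ the upper triangular Borel subgroup, $g\notin B$, $\mathcal{C}=gB$ (irreducible) and $s\ge 3$. For $x_1,x_2,x_3\in\mathcal{C}$, the elements $a=x_1^{-1}x_2$ and $b=x_1^{-1}x_3$ lie in the metabelian group $B$. So the nontrivial word $[[a,b],a^{-1}[a,b]a]$ vanishes on $\mathcal{C}^s$, and (ii) fails. Yet $\langle\mathcal{C}\rangle\supseteq\langle B,g\rangle=\SL_2(\C)$ by the Bruhat decomposition, so $G=\SL_2(\C)$ satisfies no law. Your fallbacks fail for the same reason. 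Generic triples in $gB$ do generate Zariski-dense subgroups, but Tits' theorem only yields free subgroups generated by \emph{words} in the $x_i$. That says nothing about relations among the $x_i$ themselves.

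The same example shows that your intended conclusion is false, not just unproved. Your argument would give (i) with $v_1=v_2$, i.e.\ some $p_k(\mathcal{C})$ inside a single Borel subgroup. For $\mathcal{C}=gB$ no such $k$ exists:
\begin{itemize}
\item for generic $x\in gB$, $x^k$ has the same eigenlines as $x$;
\item so a common eigenline of all $x^k$ would, by density and closedness, be a common eigenline of all of $gB$;
\item that forces $gB\subseteq B'$ for some Borel $B'$, hence $g\in B'$, so $B\subseteq B'$, so $B'=B$ and $g\in B$, a contradiction.
\end{itemize}
In this example (i) holds only in the form $xv_1\in\C\,gv_1$, with $\C v_1$ the $B$-stable line and $k=1$. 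So working with \emph{cosets} of Borel subgroups, i.e.\ allowing $v_1\neq v_2$, is essential.

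This is precisely what the paper's Lemma~\ref{tits} supplies:
\begin{itemize}
\item It starts from $w(\mathcal{C}^s)v_1\subseteq\C v_2$.
\item One letter is replaced by an unbounded sequence $g_n\in\mathcal{C}$, with $g_n^{n_i}/\|g_n^{n_i}\|\to d_i$ of rank one.
\item Irreducibility turns the limiting identity into a condition $w'(\mathcal{C}^{s-1})\ell\subseteq\ell'$ for lines $\ell,\ell'$ built from images and kernels of the $d_i$. This is again a Borel-coset condition, with one fewer letter.
\item The induction ends with $p_k(\mathcal{C})$ in a Borel coset.
\end{itemize}
The classification of algebraic subgroups of $\SL_2$ cannot substitute for this step, because $\mathcal{C}$ satisfying a law does not make the group it generates small.
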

	
	The two conditions are mutually exclusive when $s \ge 3$. This alternative has the following consequence for subsets of the free group. Given positive integers $s$ and $L$ we say that a subset $\Omega$ of a group $\Gamma$ is \emph{$(s,L)$-related} if every $x_1,\ldots,x_s \in \Omega$ admit a relation of length at most $L$, i.e. a nontrivial word of length at most $L$ such that $w(x_1,\ldots,x_s)=1$. We shall show the following, where $\F$ is a nonabelian free group:
	
	\begin{prop}\label{uniform related ultrapower}
		Given $s,L\geq 1$, there is $c(s,L)\leq 2(2s)^{3L}L^{3}$ such that every $(s,L)$-related subset of $\F$ is contained in a union of at most $c(s,L)$ cosets of cyclic subgroups in $\F$.
	\end{prop}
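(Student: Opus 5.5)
We reduce to the Tits alternative for varieties (\Cref{alternative}) via an embedding $\F \hookrightarrow \SL_2(\C)$ and a Zariski-closure argument. Fix $\F\le\SL_2(\C)$, for instance a Sanov-type embedding. The set of $s$-tuples in $\SL_2(\C)^s$ satisfying some nontrivial relation of length at most $L$ is the finite union $\bigcup_w V_w$, where $w$ ranges over the at most $(2s)^{L+1}$ nontrivial reduced words of length $\le L$ and $V_w=\{w(x_1,\dots,x_s)=1\}$ is Zariski closed. Let $\Omega\subseteq\F$ be $(s,L)$-related and let $\overline{\Omega}$ be its Zariski closure in $\SL_2(\C)$. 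Then $\Omega^s\subseteq\bigcup_w V_w$, hence $\overline{\Omega}^s=\overline{\Omega^s}\subseteq\bigcup_w V_w$. Take an irreducible component $\mathcal{C}$ of $\overline{\Omega}$. Then $\mathcal{C}^s$ is irreducible and lies in a single $V_w$. Thus no $s$-tuple from $\mathcal{C}$ generates a free group of rank $s$, and alternative (ii) of \Cref{alternative} fails. Hence (i) holds for $\mathcal{C}$: there exist $k\ge1$ and $v_1,v_2$ with $x^kv_1\in\C v_2$ for all $x\in\mathcal{C}$.

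Next we translate (i) back into the free group. Take $x,y\in\Omega\cap\mathcal{C}$. Both $x^k$ and $y^k$ send the line $\C v_1$ to $\C v_2$, so $y^{-k}x^k$ fixes the line $\C v_1$. In $\F$, every nontrivial element is hyperbolic (loxodromic) in the Sanov embedding, so it fixes exactly two lines, and the stabiliser in $\F$ of a line is cyclic: it is the maximal cyclic subgroup $\langle h\rangle$ containing any nontrivial element fixing that line. Hence all $x^k$ with $x\in\Omega\cap\mathcal{C}$ lie in a single coset $g_0\langle h\rangle$. We then need to pass from "$x^k$ lies in one coset of a cyclic group" to "$x$ lies in a few cosets of cyclic groups". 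For this we would use the structure of free groups: write $x=u\,c\,u^{-1}$ with $c$ cyclically reduced. Then $x^k=uc^ku^{-1}$ has a rigid form, and the condition $x^k\in g_0\langle h\rangle$ pins $x$ down to a bounded number of cosets. Alternatively, we can work with the line condition directly: $x$ maps the attracting/repelling data compatibly. A cleaner route is to apply the argument to the variety directly. Because $\mathcal{C}$ is irreducible and the condition is algebraic, $\Omega\cap\mathcal{C}$ should lie in a coset $gA$, where $A$ is the stabiliser of a line intersected with $\F$. We expect this translation to be the main obstacle, since it requires care with the integer $k$ and with the fact that condition (i) concerns $x^k$ rather than $x$ itself. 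Our first attempt would be a tree/boundary argument: the axes of the $x^k$ share a common endpoint in $\partial\F$ after translation, and so do the axes of the $x$.

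Finally, the bound on the number of cosets comes from bounding the number of irreducible components of $\overline{\Omega}$, together with the number of cosets produced per component. This component count is not a priori bounded, so we must refine the argument. We would replace $\overline{\Omega}$ by a maximal union argument: choose inductively subvarieties cut out by the equations $w(x,a_2,\dots,a_s)=1$ with parameters $a_i$ fixed in $\Omega$. These are varieties in a single variable $x$ of degree $\le 2L$ in the matrix entries, so Bézout bounds the number of components by roughly $(2L)^3$ per equation. Iterating over the $\le(2s)^{L+1}$ words and the $s$ slots (fixing one coordinate at a time) yields a count of the shape $2(2s)^{3L}L^3$, as in the statement. The degree bookkeeping is routine once the one-variable reduction is set up.
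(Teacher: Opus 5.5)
\textbf{Gap 1: passing from $x^k$ to $x$.} Your first step is the paper's: Zariski closure, irreducible components, one word $w$ vanishing on $\mathcal{C}^s$, and the alternative giving $x^kv_1\in\C v_2$ on $\mathcal{C}$. This yields $x^k\in g_0\langle h\rangle$ for all $x\in\Omega\cap\mathcal{C}$. Your justification there is off: the Sanov subgroup is not purely loxodromic, since its generators are unipotent. What you actually need is that a line stabiliser is a Borel subgroup, so its intersection with $\F$ is solvable, hence cyclic by Lemma~\ref{solvable abelian}.

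The genuine gap is the passage from $x^k$ to $x$, which you flag but do not resolve. The cyclic-reduction remark is an assertion without argument. The ``cleaner route'' cannot work by algebraic geometry alone. Take the surface $\mathcal{S}\subset\SL_2(\C)$ of matrices with entries $a,b,c,d$ (so $xe_1=ae_1+ce_2$) satisfying $a(a+d)=1$. It is irreducible, since eliminating $d$ gives $\{bc=-a^2,\ a\neq 0\}$. Cayley--Hamilton gives $x^2e_1=(a+d)c\,e_2$ on it, so (i) holds with $k=2$. Yet no linear equation in the entries vanishes on $\mathcal{S}$, so it lies in no coset of a Borel subgroup. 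Since $\mathcal{C}$ is the closure of $\Omega\cap\mathcal{C}$, your claim amounts to exactly such a containment for $\mathcal{C}$, so arithmetic input from $\F$ is indispensable.

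The tree sketch does not supply that input. The elements $x$ and $x^k$ have the same axis, and $x^k\in g_0\langle h\rangle$ only says that $x^k$ carries the endpoints of the axis of $h$ to those of $g_0hg_0^{-1}$. That is not a condition on the axes of the $x$'s.

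The missing ingredient is the Lyndon--Sch\"utzenberger theorem (Theorem~\ref{lyndon schutzenberg}). Suppose $k\ge2$ and $x_1,x_2,x_3$ are distinct with $x_i^k=g_0h^{n_i}$ and $n_1<n_2<n_3$; the $x_i^k$ are distinct because roots are unique in $\F$. Then $h^{n_3-n_1}=(x_1^{-1})^kx_3^k$ has all exponents $\ge 2$, which puts $x_1,x_3,h$ in a common cyclic subgroup. Hence $g_0=x_1^kh^{-n_1}\in C_\F(h)$. So either at most two $x$ qualify, or $x^k\in C_\F(h)$ and commutative transitivity gives $x\in C_\F(h)$. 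This is Corollary~\ref{powers} and the proof of the proposition.

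\textbf{Gap 2: the count.} Your counting step is not an argument either. You are right that $\overline{\Omega}$ can have unboundedly many components (take $\Omega$ finite), but your repair does not produce usable pieces. A component $\mathcal{C}'$ of $\{x: w(x,a_2,\dots,a_s)=1\}$ only satisfies $\mathcal{C}'\times\{(a_2,\dots,a_s)\}\subseteq V_w$, not $(\mathcal{C}')^s\subseteq V_w$, so the alternative does not apply to it. If the $a_i$ already satisfy a short relation among themselves (e.g.\ repeated entries), the constraint on $x$ is vacuous. And the final bound is asserted rather than derived.

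The paper enlarges the variety instead of shrinking it. The set $W=\bigcup_w V_w$ is cut out by polynomials of degree at most $D=(2s-1)^LL$ (products over the words). One takes $\mathcal{C}\supseteq\overline{\Omega}$ maximal with $\mathcal{C}^s\subseteq W$. Let $V$ be the zero set of all polynomials of degree $\le D$ vanishing on $\mathcal{C}$. Varying one coordinate at a time shows $V^s\subseteq W$, so by maximality $\mathcal{C}=V$ is cut out in degree $\le D$. Each irreducible component of $\mathcal{C}$ is then a legitimate input for the alternative, and the relative B\'ezout inequality bounds their number by $\deg(\SL_2)D^3=2D^3$. By the argument above, each component contributes a single cyclic coset; a set $\{x,y\}$ of at most two points lies in $x\langle x^{-1}y\rangle$.
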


	\bigskip
	
	\noindent{{\bf Notational conventions:}} We denote by $\N=\{0,1,\ldots\}$ the set of natural numbers, $\Z$ the relative integers and by $ [\![a,b]\!]$ the interval between $a$ and $b$ in $\Z$. A nonabelian free group is denoted by $\F$ and $[x,y]=xyx^{-1}y^{-1}$ denotes the commutator of two group elements.

	\bigskip

	\noindent{{\bf Statement of AI use:}} The main results of this manuscript (the material up to \Cref{secproof main} included) were finalized in early July 2026 and discovered without any use of AI. The manuscript was then written without AI help, except for proofreading and looking up references. In particular, reference \cite{lyndon1962equation} was suggested by ChatGPT 5.5. The last section (\Cref{further}) however contains complementary remarks that have been obtained in September 2026 with some AI help.
	\bigskip
	
	\noindent{{\bf Acknowledgments:}} The first author acknowledges support from UKRI grant no. 1017.  The second author acknowledges support from the Clarendon Fund and Keble College, University of Oxford. For the purpose of open access, the authors have applied a CC BY public copyright licence to any author accepted manuscript arising from this submission.

	\section{Preliminaries}

				Let $\F$ be a non abelian free group. The following is classical:		\begin{lemm}
			The free group $\F$ satisfies the two following conditions.
			
			$(i)$ Let $u\in \F$ be a non-trivial element. The centralizer of $u$, denoted by $C_\F(u)$, is cyclic.
			
			$(ii)$ For every $u\in\F$ such that $u\neq 1$ and all $g\in \F$, if $[u,g^{-1}ug] = 1$ then $[g,u] = 1$.
		\end{lemm}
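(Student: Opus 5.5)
Both parts rest on two classical facts about a free group $\F$. First, $\F$ is torsion-free. Second, by Nielsen--Schreier every subgroup of $\F$ is free, and a free group is abelian only if its rank is at most $1$; two elements that commute generate an abelian, hence cyclic, subgroup. From these, part $(i)$ is a maximality argument and part $(ii)$ is a root-extraction argument.

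For $(i)$, fix $u\neq 1$ and take any $x,y\in C_\F(u)$. The group $\langle u,x\rangle$ is abelian, so it is cyclic, say generated by $z$; the same holds for $\langle u,y\rangle$, generated by $z'$. Then $u=z^m=z'^n$ with $m,n\neq 0$. I claim $z$ and $z'$ commute. Note that $z'$ commutes with $z^m$. Working in the cyclic group $\langle z,z'\rangle$ is circular, so instead I use unique roots in $\F$: if $a^k=b^k$ with $k\neq 0$ then $a=b$. This follows from the reduced form. Write $a=w\bar a w^{-1}$ with $\bar a$ cyclically reduced, so that $a^k=w\bar a^k w^{-1}$ is reduced as written.

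With unique roots, I get $(z' z z'^{-1})^m = z' z^m z'^{-1} = z^m$, so $z' z z'^{-1} = z$. Thus $\langle x,y,u\rangle\subseteq\langle z,z'\rangle$ is abelian, hence cyclic. Consequently $C_\F(u)$ is a subgroup in which any two elements commute, i.e. it is abelian. An abelian subgroup of $\F$ is free of rank at most $1$, and since it contains $u\neq 1$ it is infinite cyclic. Abelian subgroups of $\F$ need not be finitely generated a priori, but Nielsen--Schreier handles this, since an abelian free group of any rank has rank $\le 1$.

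For $(ii)$, suppose $[u,g^{-1}ug]=1$ with $u\neq 1$. By $(i)$, $C_\F(u)=\langle c\rangle$, and so $g^{-1}ug=c^k$ for some $k$. Also $u=c^m$ with $m\neq 0$. Conjugation preserves cyclically reduced length, that is, the length of the cyclic reduction, which I denote $\|\cdot\|$ and which satisfies $\|c^j\|=|j|\,\|c\|$. Therefore $|m|\,\|c\|=|k|\,\|c\|$, so $k=\pm m$.

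If $k=-m$, then $u$ is conjugate to $u^{-1}$. Then $g^2$ centralizes $u$, so $g^2\in\langle c\rangle$, and hence $g$ commutes with $g^2$. By $(i)$ applied to $g^2$ (if $g^2=1$ then $g=1$ by torsion-freeness), $g$ lies in $C(g^2)$, which is the cyclic group containing $c$, so $g$ commutes with $u$. This gives $g^{-1}ug=u$, contradicting $k=-m\neq m$. So $k=m$, which means $g^{-1}ug=u$, i.e. $[g,u]=1$.

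The main subtlety is the unique-roots fact, which is needed both in $(i)$ and in the step where $g^2$ and $c$ are shown to lie in a common cyclic group. It is resolved by the explicit reduced form $w\bar a^k w^{-1}$: $a$ is recovered from $a^k$ by extracting the conjugator and the $k$-th root of the cyclically reduced part, and this is where the real care lies.
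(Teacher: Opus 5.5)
Your proof is correct. Part $(ii)$ follows the paper's argument essentially step by step. With $C_\F(u)=\langle c\rangle$, conjugation invariance of cyclically reduced length forces $g^{-1}ug=u^{\pm1}$. In the case $g^{-1}ug=u^{-1}$, the element $g^2$ centralizes $u$, so $g$ and $u$ both lie in the cyclic group $C_\F(g^2)$ (or $g=1$). That case directly gives $[g,u]=1$, so you do not need to phrase it as a contradiction.

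The difference is in $(i)$. The paper does not prove it and simply quotes Lyndon--Schupp. You instead derive it from Nielsen--Schreier together with uniqueness of roots: $(z'zz'^{-1})^m=z'uz'^{-1}=u=z^m$ gives $[z,z']=1$, so $C_\F(u)$ is abelian, hence free of rank $1$. This makes the lemma self-contained, at the cost of the reduced-word lemma on roots. You only sketch the step where the decomposition $w\bar a^k w^{-1}$ is recovered uniquely from $a^k$.

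If you want to bypass roots entirely, use the center instead. $C_\F(u)$ is free by Nielsen--Schreier and has $u\neq 1$ in its center. A free group of rank at least $2$ is centerless, since the centralizer of a basis element $a$ is $\langle a\rangle$ by a direct reduced-word check. So $C_\F(u)$ is infinite cyclic.
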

		\begin{proof}
			We refer to \cite[Chapter I, Theorem 2.19]{lyndon1977combinatorial} for $(i)$. 
			For $(ii)$ pick $u\in\F\backslash \{1\}$ and $g\in \F$ such that $[u,g^{-1}ug] = 1$. Denote by $v$ a generator of $C_\F(u)$, so that $g^{-1}ug = v^k$ for some $k\in\Z$ and $u = v^m$ for some $m\in\Z$. Thus we have $g^{-1}v^mg = v^k$. The cyclically reduced length $l(w)$ of a word $w\in\F$, defined by the smallest word length of a conjugate of $w$, is invariant by conjugation hence $|m|l(v) = l(v^m) = l(v^k) = |k|l(v)$ so $k = \pm m$ and $g^{-1}ug = u^{\pm 1}$. If $g^{-1}ug = u$ we are done. Otherwise, $g^{-2}ug^2= u$ hence $g^2$ commutes with $u$, so $u,g\in C_\F(g^2)$ and either $g^2 = 1$ so $g = 1$, or $[g,u] = 1$. In both cases $[g,u] = 1$.
		\end{proof}
		Note that $(i)$ implies that $\F$ is commutative transitive: that if $u$ commutes with $v\neq 1$ and $v$ with $w$, then $u$ commutes with $w$. 
		
		The \emph{abelian centralizers}, that is the  subgroups $C_\F(u)$ for $u\neq 1$, are maximal abelian subgroups. Every abelian subgroup of $\F$ is contained in such a subgroup. In fact:

		\begin{lemm}\label{solvable abelian}
			
			Every solvable subgroup of $\F$ is cyclic.
		\end{lemm}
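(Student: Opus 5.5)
We prove that every solvable subgroup $S$ of $\F$ is cyclic by induction on the derived length of $S$, using the preceding lemma: centralizers of nontrivial elements are cyclic, and commutativity is transitive on $\F\setminus\{1\}$.

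\textbf{Abelian case.} If $S\neq 1$ is abelian, pick $u\in S\setminus\{1\}$. Then $S\subseteq C_\F(u)$, which is cyclic by part $(i)$ of the previous lemma. Subgroups of cyclic groups are cyclic, so $S$ is cyclic. This also covers derived length at most one.

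\textbf{Inductive step.} Suppose $S$ is solvable of derived length $n\ge 2$, and let $N=S^{(n-1)}$ be the last nontrivial term of its derived series. Then $N$ is a nontrivial abelian normal subgroup of $S$, so by the abelian case $N=\langle u\rangle$ with $u\neq 1$. For any $g\in S$, normality gives $g^{-1}ug\in N$, so $g^{-1}ug$ commutes with $u$, that is, $[u,g^{-1}ug]=1$. Part $(ii)$ of the previous lemma then gives $[g,u]=1$. Hence $S\subseteq C_\F(u)$, which is cyclic, so $S$ is cyclic, hence abelian. This contradicts $n\ge 2$. Therefore every solvable subgroup has derived length at most one, and the abelian case finishes the proof.

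Alternatively, one can argue by induction directly: $S^{(1)}=[S,S]$ is solvable of smaller length, hence cyclic by induction. If it is nontrivial, the same normality argument with a generator $u$ shows $S\subseteq C_\F(u)$. If it is trivial, $S$ is abelian.

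\textbf{Main obstacle.} The only nonformal input is part $(ii)$ of the previous lemma. It upgrades "$g$ normalizes $\langle u\rangle$" to "$g$ centralizes $u$", which rules out an infinite dihedral-type behaviour $g^{-1}ug=u^{-1}$. That lemma has already been established, so the remaining steps are routine.
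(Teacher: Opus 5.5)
Your proof is correct and is essentially the paper's argument: take the last nontrivial term $S^{(n-1)}$ of the derived series, pick $u\neq 1$ in it, apply part $(ii)$ to $[u,g^{-1}ug]=1$, and conclude $S\subseteq C_\F(u)$, which is cyclic. The induction and contradiction framing, and the preliminary step showing that $S^{(n-1)}$ is cyclic, are unnecessary: any nontrivial $u$ in that abelian normal subgroup already gives the conclusion directly.
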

		\begin{proof}
			Let $H$ be a solvable subgroup of $\F$ and pick $n\in\N$ minimal with $H^{(n)} =\{1\}$. Then $A = H^{(n-1)}$ is abelian non trivial. Pick $u\in A$ non-trivial. For every $g\in H$, $g^{-1}ug\in H^{(n-1)}$ hence by $(ii)$ above $[g,u] = 1$, so $H\subseteq C_{\F}(u)$.
		\end{proof}

\subsection{Spectral radius of free sets}

Kesten \cite{kesten} showed in his thesis that if $x_1,\ldots,x_s$ are elements of a discrete group $\Gamma$ and $\nu$ is the uniform measure on the $2s$ elements $x_1,x_1^{-1},\ldots,x_s,x_s^{-1}$, then $$\|\lambda(\nu)\|\ge \frac{\sqrt{2s-1}}{s}$$
with equality if and only if $\langle x_1,\ldots,x_s\rangle$ is free of rank $s$.
Furthermore it is known (cf. \cite[Theorem IV J]{akemann1976computing} and \cite[Theorem 9]{lehner}) that if $\mu$ denotes the uniform measure on $x_1,\ldots,x_s \in \Gamma$ and $s \ge 3$, then \begin{equation}\label{freeradius}\|\lambda(\mu)\| \ge b_s:=\frac{2\sqrt{s-1}}{s}\end{equation} with equality if and only if $x_1^{-1}x_2,\ldots,x_1^{-1}x_s$ generates a free subgroup of rank $s-1$.

		\subsection{Uniform exactness}
	Recall the following definition from \cite{ozawa2026uniform}. The definition we give is not the original one used in \cite{ozawa2026uniform}, however it is equivalent to it due to \cite[Theorem 30]{ozawa2026uniform}. 
	\begin{defin}[uniform exactness]

				A discrete group $\Gamma$ is said to be uniformly exact if the following property holds.
		There is a function $k : \N\times (0,1)\to \N$ such that for every $\eps > 0$ and every finite subset $F\subseteq \Gamma$, there is a finite subset $E\subseteq \Gamma$ with $|E|\leq k(|F|,\varepsilon)$ and a map $\eta : \Gamma\to \Prob \Gamma$, $x \mapsto \eta^x$ with
		$$\bigcup_{x\in \Gamma}\text{supp}(\eta^x)\subseteq E \qquad \text{and} \qquad \sum_{s\in F}\sup_{x\in \Gamma} \|\eta^{sx}-s\eta^x\|_1\leq \varepsilon.$$
	\end{defin}
	Here $\Prob \Gamma$ denotes the set of probability measures on $\Gamma$. A key consequence of uniform exactness is the following \emph{locality property of spectral radius}, see \cite[Proposition D]{ozawa2026uniform}.  If $\mu$ is a finitely supported  probability measure on $\Gamma$ and $\delta>0$, there is $M_{\delta,s}>0$ depending only on $\delta$ and the cardinality $s$ of the support of $\mu$, such that \begin{equation}\label{ue} \widetilde{\mu}^{n}(1)^{1/2n} \ge (1-\delta) \|\lambda(\mu)\|\end{equation} for all $n \ge M_{\delta,s}$.

	 Along with introducing uniform exactness in \cite{ozawa2026uniform} Ozawa proved that:
	\begin{theo}[Ozawa]\label{oz}
		Fully residually free groups are uniformly exact.
	\end{theo}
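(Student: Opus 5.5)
Since the statement is \cite{ozawa2026uniform}'s, I would follow the tree-geometric strategy for exactness, made quantitative. I would first treat a nonabelian free group $\F$, and then pass to fully residually free groups through their structure theory.

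\emph{Reduction to $\langle F\rangle$.} Given a finite $F\subseteq\F$ and $\eps>0$, let $H=\langle F\rangle$. This is a free group of rank at most $|F|$, and I use its own Cayley tree $T$ with respect to the generating set $F$. Choose a right transversal $R$ for $H$ in $\F$ and write each $x\in\F$ uniquely as $x=ht$ with $h\in H$ and $t\in R$. Then $sx=(sh)t$ for $s\in F$. So if I build $\eta_H:H\to\Prob H$ with the required property and set $\eta^x:=\eta_H^h$, then all supports lie in the same finite set $E\subseteq H$. This handles arbitrary rank: $|E|$ is then controlled by the ball sizes in $T$, which depend only on $|F|$ and $\eps$.

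\emph{Construction on $H$.} For $h\in H$, let $\eta_H^h$ be the uniform probability on the first $N$ vertices of the geodesic $[1,h]$ in $T$. If $|h|<N$, I instead use a normalised weighting that puts the leftover mass on $1$ (or a tapered kernel), so that short elements are handled too. Translating by $s\in F$ gives $s\eta_H^h$, which is supported on the initial segment of $[s,sh]$. The geodesics $[1,sh]$ and $[s,sh]$ merge at a branch point within distance $|s|_F=1$ of $1$. Hence the two measures agree except on $O(1)$ vertices at each end, and
$$\|\eta_H^{sh}-s\eta_H^h\|_1\le C/N$$
uniformly in $h$. Taking $N\approx C|F|/\eps$ gives $\sum_{s\in F}\sup_h\|\eta^{sx}-s\eta^x\|_1\le\eps$. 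The support lies in the ball of radius $N+1$ in $T$, whose size is bounded by $(2|F|)^{N+1}$, so $k(|F|,\eps)$ exists. The one point to check carefully is the short-element case, which a linearly decaying weight along the geodesic should settle.

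\emph{Passage to fully residually free groups.} For fully residually free $L$, the same transversal trick reduces the problem to the finitely generated subgroup $\langle F\rangle$, which is a limit group. A residual homomorphism $\varphi:L\to\F$ does not directly help, because the measures must live on $L$ and $\varphi$ is injective only on a finite set, not on the relevant balls uniformly. Instead I would use the Kharlampovich--Myasnikov theorem that limit groups embed into iterated extensions of centralisers of free groups. I would then prove that uniform exactness, with explicit control of $k$, is preserved under amalgamation $G*_{C}(C\times\Z^m)$ over a maximal cyclic centraliser $C$. For this step I would run the tree argument above on the Bass--Serre tree, combine it with the amenable (abelian) vertex groups, and use uniform exactness of the other vertex group on the edge-coset data.

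\emph{Main obstacle.} The hardest step is making $k$ depend only on $|F|$ and $\eps$ through this induction. The depth of the tower and the ranks of the abelian pieces must be bounded in terms of $|F|$ alone, for instance via the finiteness of the relevant JSJ/tower complexity for $|F|$-generated limit groups. I would attempt this bound first, by induction on the number of generators together with Sela's descending chain condition for limit groups.
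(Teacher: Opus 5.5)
Your transversal reduction to $H=\langle F\rangle$ is fine, and so is your estimate when $F$ happens to be a free basis of $H$. But the free-group step proves only ordinary exactness. It breaks exactly where uniformity is required.

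\textbf{The free-group step is not uniform.} The Cayley graph of $H$ with respect to $F$ is a tree only if $F$ is a free basis of $H$. In general it is not: take $F=\{a,b,ab\}$ or $F=\{a,b,a^n\}$ in $\F=\langle a,b\rangle$. Suppose you use instead the tree $T_B$ of a free basis $B$ of $H$. Then your merging argument gives $\|\eta_H^{sh}-s\eta_H^h\|_1=O(|s|_B/N)$. So you need $N\gtrsim |F|\max_{s\in F}|s|_B/\eps$, and $E$ is a ball of radius about $N$ in $T_B$, of size exponential in $N$.

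The problem is that $|s|_B$ is not bounded in terms of $|F|$. For $F=\{a,b,a^n\}$ we have $H=\F$. In any basis $B$, $a$ is conjugate to a cyclically reduced $c\neq 1$, so $|a^n|_B\ge n|c|_B\ge n$. Hence your $E$ grows with $n$ although $|F|=3$.

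Keeping $\mathrm{Cay}(H,F)$ does not rescue the argument. It is not a tree, and it contains $\mathrm{Cay}(\Z,\{1,n\})$ isometrically: killing $b$ is a $1$-Lipschitz retraction onto the coset $\langle a\rangle$. That graph has flat rectangles of width about $n/2$, so the hyperbolicity constant grows with $n$ and geodesic averaging again loses uniformity.

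The same defect reappears in your limit-group step. The displacement of elements of $F$ in the Bass--Serre tree of an extension of centralisers is not controlled by $|F|$. So even a bound on tower complexity would not produce a function $k(|F|,\eps)$.

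Uniformity is also the only thing the paper uses. Corollary~\ref{small relation if big norm} needs $M_{1/2,s}$ to depend on $s$ alone.

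\textbf{What the paper does and what is missing.} The paper does not prove this theorem; it quotes it from \cite{ozawa2026uniform}. Remark~\ref{explicit} records that Ozawa's proof obtains the uniformity through ultraproducts, hence non-effectively. An explicit bound $M_{\delta,s}\le (4s)^6/\delta^4$ came only later, from a different argument.

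Your outline lacks any mechanism that handles finite sets $F$ containing elements that are long in every free basis of $\langle F\rangle$, such as large powers or long conjugates. Already for $F=\{a,b,a^n\}$, a uniform $\eta$ must average over two-dimensional boxes $\{a^{i+jn}: 0\le i,j<m\}$ inside cosets of $\langle a\rangle$, as one does for $\Z$ with generators $1,n$, and glue this to the tree structure. Nothing in your proposal produces such constructions with bounds independent of the lengths of the elements of $F$. Without either a compactness argument of Ozawa's type or a genuinely uniform explicit construction, the proposal does not reach the statement.
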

	We will only use this fact through the following consequence.
	
	\begin{coro}\label{small relation if big norm}
		Let $\Gamma$ be a  uniformly exact discrete group, and let $s\geq 2$ be a positive integer and $b_s=2s^{-1}(s-1)^{1/2}$. If $\mu$ is the uniform measure on $x_1,\ldots,x_s \in \Gamma$, then either $\|\lambda(\mu)\|\leq 2b_s$, or there is a word $w\in\F_s\backslash\{1\}$ of length $\leq 2M_{1/2,s}$ such that $w(x_1,\cdots,x_s) = 1$.
	\end{coro}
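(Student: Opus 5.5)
We argue by contraposition: assume $\|\lambda(\mu)\| > 2b_s$, with $\mu$ the uniform measure on $x_1,\ldots,x_s$, and produce a short relation. Apply the locality property \eqref{ue} of uniform exactness with $\delta = 1/2$. With $n = M_{1/2,s}$ (rounded up to an integer if needed), it gives
$$\widetilde{\mu}^{n}(1)^{1/2n} \ge \tfrac12 \|\lambda(\mu)\| > b_s .$$
So the return probability $\widetilde{\mu}^n(1)$ strictly exceeds $b_s^{2n}$.

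Next we expand $\widetilde{\mu}^n(1)$ combinatorially. Since $\widetilde\mu = \mu*\mu^*$ and $\mu$ is uniform on $s$ points (counted with multiplicity), we have
$$\widetilde{\mu}^n(1) = s^{-2n}\,\#\{(i_1,j_1,\ldots,i_n,j_n) : x_{i_1}x_{j_1}^{-1}\cdots x_{i_n}x_{j_n}^{-1} = 1\}.$$
Each tuple corresponds to a word $w = a_{i_1}a_{j_1}^{-1}\cdots a_{i_n}a_{j_n}^{-1}$ of length $2n$ in the free group $\F_s = \langle a_1,\ldots,a_s\rangle$, and the tuple is counted exactly when $w(x_1,\ldots,x_s)=1$.

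Suppose, for contradiction, that no nontrivial word of length $\le 2n$ vanishes at $(x_1,\ldots,x_s)$. Then the only tuples counted are those whose word $w$ is already trivial in $\F_s$. Their number is exactly $s^{2n}\,\widetilde{\mu_0}^{n}(1)$, where $\mu_0$ is the uniform measure on the free generators $a_1,\ldots,a_s$ of $\F_s$. Hence $\widetilde{\mu}^n(1) = \widetilde{\mu_0}^n(1)$.

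To conclude we use Kesten's formula \eqref{kest}, which gives $\widetilde{\mu_0}^n(1)^{1/2n} \le \|\lambda(\mu_0)\|$ via the supremum form, using $\|\lambda(\widetilde{\mu_0})\| = \|\lambda(\mu_0)\|^2$. For the uniform measure on free generators, $\|\lambda(\mu_0)\| = b_s$ by \eqref{freeradius}: here $a_1^{-1}a_2,\ldots,a_1^{-1}a_s$ freely generate a free group of rank $s-1$, which is the equality case. This gives $\widetilde\mu^n(1) \le b_s^{2n}$, contradicting the first step. Therefore some nontrivial word of length $\le 2n \le 2M_{1/2,s}$ (up to rounding) satisfies $w(x_1,\ldots,x_s)=1$.

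The step needing the most care is the value of $\|\lambda(\mu_0)\|$ for the uniform measure on $s$ free generators. The cited equality statement \eqref{freeradius} is stated for $s \ge 3$, but the corollary allows $s = 2$. For $s=2$, $b_2 = 1$, so $\|\lambda(\mu)\| > 2b_2 = 2$ is impossible and the claim is vacuous. The factor $2$ in $2b_s$ is in any case generous, since only a factor $1/(1-\delta) = 2$ is needed. The remaining checks are routine: the map from tuples to words, and the fact that $\mu$ may have repeated points, which the counting handles by working with indices rather than with distinct elements.
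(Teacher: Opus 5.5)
Your proposal is correct and follows the paper's own argument. For $n\ge M_{1/2,s}$ the locality bound \eqref{ue} gives $\widetilde\mu^n(1)^{1/2n}\ge \|\lambda(\mu)\|/2$, while the absence of relations of length $\le 2n$ forces $\widetilde\mu^n(1)$ to equal the free-group return probability, which is at most $b_s^{2n}$ by \eqref{freeradius}; your explicit counting, the observation that $s=2$ is vacuous because $b_2=1$, and the rounding of $M_{1/2,s}$ only fill in details the paper leaves implicit.
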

	\begin{proof} If $\mu$ denotes the uniform probability measure on $s$ (not necessarily distinct) elements of $\Gamma$ and if these elements do not have any relation of length at most $2n$, then $\widetilde \mu^n (1)=\widetilde \nu^n(1)$, where $\nu$ is the uniform measure on the $s$ free generators of a free group of rank $s$. In particular $\widetilde \mu^n(1)\leq \|\lambda(\nu)\|^{2n} = b_s^{2n}$ by \eqref{freeradius}.
	
	On the other hand, by Theorem \ref{oz} and the remark above it, $\widetilde \mu^n (1)^{1/2n} \ge \|\lambda(\mu)\|/2$ as soon as $n \ge M_{1/2,s}$. So either $\|\lambda(\mu)\| \leq 2b_s$, or there is a relation of length at most $2M_{1/2,s}$. 
	\end{proof}

	\section{A Tits alternative for subvarieties}
	In this section, we prove \Cref{alternative}. The main argument of this proof is contained in the following lemma. Recall that if $K$ is a field, then a Borel subgroup of $\SL_2(K)$ is by definition a subgroup of the form $\{g\in \SL_2(K)\; |\; gv\in Kv\}$ for some vector $v\in K^2\backslash\{0\}$.
	\begin{lemm}\label{tits}
		Let $K$ be an algebraically closed field of characteristic zero, and $\mathcal{C}$ an irreducible algebraic variety in $\SL_2(K)$. Assume that there is a non-trivial word in $s$ letters $w \in \F_s\backslash\{1\}$ such that $w(\mathcal{C}^s)$ lies in a coset of a Borel subgroup. Then there is an integer $k\geq 1$ such that $p_k(\mathcal{C}) := \{g^k \;|\; g\in \mathcal{C} \}$ lies in a coset of a Borel subgroup. 
	\end{lemm}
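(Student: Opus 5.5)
The plan is to argue by contradiction using generic points and a ping-pong argument over a suitable valued field. Write a coset of a Borel subgroup as $\{h : hv_1\in Kv_2\}$ for some $v_1,v_2\in K^2\setminus\{0\}$; this is a Zariski-closed condition on $h$. Let $F=K(\mathcal{C}^s)$ be the function field of $\mathcal{C}^s$, let $L=\overline{F}$, and let $x_1,\dots,x_s\in\SL_2(L)$ be independent generic points of $\mathcal{C}$. The hypothesis then becomes $w(x_1,\dots,x_s)v_1\in Lv_2$ for some fixed $K$-rational $v_1,v_2$. Similarly, the conclusion for a given $k$ is equivalent to the generic power $x_1^k$ mapping some $K$-rational line to a $K$-rational line. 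Assume that no $p_k(\mathcal{C})$ lies in a coset of a Borel subgroup. The aim is to find an absolute value on $L$ for which the $x_i$ play ping-pong on $\P^1$, and for which $w(x)[v_1]$ is forced near a point that is not $K$-rational. That contradicts $w(x)[v_1]=[v_2]$.

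I would split into cases according to the trace function $\mathrm{tr}:\mathcal{C}\to K$.

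\emph{Case 1: $\mathrm{tr}$ is nonconstant on $\mathcal{C}$.} Choose a place of $K(\mathcal{C})$ at which $\mathrm{tr}$ has a pole, take a copy of it for each factor, and extend to an absolute value on $L$. For this absolute value each $x_i$ is loxodromic, with attracting and repelling points $a_i^{\pm}\in\P^1$ over the completion. These points are distinct for distinct $i$ because the $x_i$ are independent. The points are $K$-rational only if the eigenlines of the generic element are constant on $\mathcal{C}$. In that case all of $\mathcal{C}$ lies in a common Borel subgroup and $k=1$ works. Otherwise $a_i^{\pm}$ lie off $\P^1(K)$, and after replacing $x_i$ by a high power (which is allowed, since raising to a power only changes the word) the standard ping-pong applies. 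It shows that $w(x)$ sends every point outside small neighbourhoods of the repelling points of its last letter into a small neighbourhood of the attracting point of its first letter (after cyclic reduction and conjugation, handled by adjusting $v_1,v_2$). The point $[v_1]$ is $K$-rational, so it avoids these neighbourhoods. Its image is then close to a non-$K$-rational point, so it cannot equal $[v_2]$.

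\emph{Case 2: $\mathrm{tr}\equiv c$ is constant.} If $c\neq\pm2$, all elements of $\mathcal{C}$ are semisimple with the same eigenvalues $\lambda^{\pm1}$. If $\lambda$ is a root of unity of order $k$, then $p_k(\mathcal{C})=\{1\}$ and we are done. Otherwise there is an absolute value on the finitely generated field $\Q(\lambda)$ with $|\lambda|>1$ (characteristic zero is used here). Extending it to $L$ trivially on the transcendental part makes every $x_i$ uniformly loxodromic. The eigenlines of the $x_i$ are then independent generic points of a positive-dimensional family, because constant eigenlines again give a Borel subgroup. The ping-pong of Case 1 goes through verbatim. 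If $c=\pm2$, replace $\mathcal{C}$ by $p_2(\mathcal{C})$, which consists of unipotents; this is harmless since a relation for $x$ gives one for $x^2$. Either the unique fixed line is constant, which gives a Borel subgroup, or it varies. In the latter case, use a place of $K(\mathcal{C})$ where the off-diagonal entry blows up, and run a parabolic version of ping-pong.

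The main obstacle is the ping-pong step: making rigorous that $w(x)[v_1]$ lands near a point provably outside $\P^1(K)$. This needs a non-archimedean absolute value trivial on $K$ in Case 1, so that $K$-rational points are all far from transcendental attracting points. It also needs care with the conjugation and cyclic-reduction bookkeeping of $w$, and with choosing independent places for the different $x_i$. I expect the unipotent subcase to require the most delicate estimates.
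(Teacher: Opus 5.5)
There is a genuine gap, and it sits in the ping-pong step you flagged: as set up, the argument would prove a false statement. In Case~1 the only part of the negated conclusion you use is that the eigenlines of the generic element are nonconstant, i.e.\ that $\mathcal{C}$ is not contained in a Borel \emph{subgroup}. That is not enough. Take $B$ upper triangular, $g=\begin{pmatrix}0&1\\-1&0\end{pmatrix}$ and $\mathcal{C}=gB$, whose elements are $\begin{pmatrix}0&a^{-1}\\-a&-c\end{pmatrix}$. The trace $-c$ is nonconstant and the eigenlines $[1:a\mu^{\pm1}]$ vary, yet $w=x_1^{-1}x_2$ maps $\mathcal{C}^2$ into $B$. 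So ping-pong must fail, whatever place you choose. For example, at a place with $|c_i|>1=|a_i|$ the attracting points of all the copies $x_i$ reduce to the same point $[e_2]$, and the repelling points to $[e_1]=[v_1]$.

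The heuristic ``not $K$-rational, hence far from $\P^1(K)$'' is false. A transcendental point can reduce to a $K$-rational one, here the limiting eigendirection at infinity on $\mathcal{C}$. For copies of one generic point taken with copies of one place, these reductions can coincide. Passing to powers $x_i^N$ would separate the fixed points, but that move is not available. Ping-pong for $x_1^N,\dots,x_s^N$ only controls words in the $x_i^N$, and the given $w(x_1,\dots,x_s)$ is not such a word, so ``raising to a power only changes the word'' runs the wrong way. Case~2 fails in the same way. The curve $\{g\begin{pmatrix}a&c_0\\0&a^{-1}\end{pmatrix}\}\subset gB$ has constant trace and varying eigenlines. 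With your absolute value ($|\lambda|>1$, $|a_i|=1$), its attracting and repelling points again all reduce to $[e_2]$ and $[e_1]$.

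These degenerations cannot be avoided by a cleverer choice of place. They are exactly the situations where the lemma's conclusion holds with a \emph{coset}, or only for some $k>1$. For instance, take $\mathcal{V}=\{x\in\SL_2(K): x^2e_1\in Ke_2\}$. In coordinates $x=\begin{pmatrix}\alpha&\beta\\ \gamma&\delta\end{pmatrix}$ it is the irreducible surface $\alpha^2+\beta\gamma=0$, with trace $1/\alpha$. It lies in no hyperplane, hence in no Borel coset, and no $p_k(\mathcal{V})$ is a point. Yet $x_1^{-2}x_2^2$ maps $\mathcal{V}^2$ into $B$. Your dichotomy can never reach this conclusion: constant eigenlines give $k=1$, a root of unity makes $p_k(\mathcal{C})$ a point, and otherwise you claim a contradiction.

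What is missing is a mechanism that turns the degeneration into the conclusion instead of trying to rule it out. The paper does this by induction on the number of letters. It sends one letter to infinity along $g_n\in\mathcal{C}$ and takes the rank-one limits $d_i$ of $g_n^{n_i}/\|g_n^{n_i}\|$. Their images and kernels are precisely the $K$-rational limiting directions that obstruct your ping-pong. From the limiting relation it reads off that some subword $w_i$ in $s-1$ letters maps a fixed line into a fixed line on all of $\mathcal{C}^{s-1}$. The power $k$ then comes from the base case $w=x^k$.
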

	\begin{proof}
		By a standard base change argument, it suffices to prove the result over $\C$. For a vector $v \in \C^2$ write $\langle v \rangle$ for the line $\C v$. We prove the lemma by induction on $s\geq 1$. The case $s = 1$ is obvious. Therefore, it suffices to show that if there is a non-trivial word $w$ in $s\geq 2$ letters such that $w(\mathcal{C}^{s})$ is contained in a coset of a Borel subgroup, then the same holds for $s-1$. If $\mathcal{C}$ is zero dimensional (hence just a point) there is nothing to prove so we assume that $\dim(\mathcal{C})\geq 1$. We may label the letters such that $w = x_s^{n_1}w_1x_s^{n_2}\cdots x_s^{n_r}w_r$ where the $w_i$ are words in the letters $x_1,\cdots,x_{s-1}$ and such that $w_i$ is non-trivial for all $i\leq r-1$.
		
		We pick two non-zero vectors $v_1,v_2\in \C^2$ such that $w(\mathcal{C}^s)v_1\subseteq \C v_2$. Since $\mathcal{C}$ is unbounded, we pick a sequence $(g_n)_{n\geq 1}$ of elements of $\mathcal{C}$ such that $\|g_n\| \xrightarrow[n\to+\infty]{} +\infty$. If $p_k(\mathcal{C})$ is zero dimensional for some $k\geq 1$ (as a constructible set, hence just a point because it is the image of an irreducible variety by a regular map) then the conclusion holds. So we assume it is of dimension at least $1$ for every $k$, hence unbounded. We can then arrange so that the sequence $\|g_n^{n_i}\|$ tends to $+\infty$ for all $i\in [\![1,r]\!]$. To see it, note that $\frac{1}{2}\Tr(g^k) = T_k(\frac{1}{2}\Tr(g))$ for all $g\in \SL_2(\C)$ where $T_k$ is the Chebychev polynomial of the first kind, so $g^k$ tends to infinity provided $\Tr(g)$ does. This justifies the existence of $(g_n)$ when $\Tr(g)$ is not constant on $\mathcal{C}$. If $\Tr(g)$ is constant on $\mathcal{C}$ then by Cayley Hamilton we get $g^k = c_1g+c_2$ for all $g\in \mathcal{C}$, for constants $c_1,c_2$ depending on $k$ but not on $g\in\mathcal{C}$. If $c_1 = 0$, then $p_k(\mathcal{C})$ is a point, and otherwise $g^k$ tends to infinity with $g$ as desired.
		
		Passing to a subsequence if necessary, we may assume that $\frac{g_n^{n_i}}{\|g_n^{n_i}\|}$ converges to a rank one matrix $d_i$ for every $i\in[\![1,r]\!]$. Now for each $n\in \N$ and each $\underline{x}\in\mathcal{C}^{s-1}$ there is a scalar $\lambda(n,\overline{x})$ such that:
		\begin{equation}\label{firsteq}g_n^{n_1}w_1(\overline{x})g_n^{n_2}\cdots g_n^{n_r}w_r(\overline{x})v_1 = \lambda(n,\overline{x})v_2\end{equation}
		Dividing by the product of the $\|g_n^{n_i}\|$ and letting $n\to+\infty$, we get a scalar $\lambda(\overline{x})$ such that:
		\begin{equation}\label{secondeq}d_1w_1(\overline{x})d_2\cdots d_r w_r(\overline{x})v_1 = \lambda(\overline{x})v_2\end{equation}
		Note that $\lambda(\overline{x})$ is finite and is a regular function of $\overline{x}\in\mathcal{C}^{s-1}$ because the left hand side is. In particular, by irreducibility either $\lambda$ is identically zero or generically non-zero.
		
		Suppose first that $\lambda$ vanishes identically on $\mathcal{C}^{s-1}$. Then by \eqref{secondeq} above we see that for each $\overline{x}\in\mathcal{C}^{s-1}$ there is $i\in [\![1,r]\!]$ such that $w_i(\overline{x})\Ima(d_{i+1})\subseteq \ker(d_i)$ where by $\Ima(d_{r+1})$ we mean $\langle v_1\rangle$. By irreducibility one of those identities holds identically over $\mathcal{C}^{s-1}$, which concludes the induction step unless $w_r  = 1$, a case we postpone for now.
		
		Suppose now that $\lambda(\overline{x})$ does not vanish identically. Then $w_1(\overline{x})d_2w_2(\overline{x})\cdots d_rw_r(\overline{x})v_1$ does not vanish identically either. But it is the limit of $\frac{\|g_n^{n_1}\|\lambda(n,\overline{x})}{P_n}g_n^{-n_1}v_2$ where $P_n$ is the product of the $\|g_n^{n_i}\|$. Now we know that $\frac{\lambda(n,\overline{x})}{P_n}$ converges to $\lambda(\overline{x})$, therefore $\|g_n^{n_1}\|g_n^{-n_1}v_2$ has to converge to some non-zero vector $v_3\in\C^2$, and:
		$$w_1(\overline{x})d_2\cdots d_rw_r(\overline{x})v_1= \lambda(\overline{x})v_3 $$
		for generic $\overline{x}\in\mathcal{C}^{s-1}$. Hence $w_1(\mathcal{C}^{s-1})\Ima(d_2)\subset\langle v_3\rangle$. This concludes the induction step in this case.
		
		It remains to treat the case where $w_r = 1$, that is when the word starts and ends with a non-trivial power of $x_s$. Then instead of substituting $g_n$ in $x_s$, we substitute it into $x_{s-1}$ provided it appears in $w$ (if it does not then the induction step follows directly). This leads to:
		$$w_1'(\overline{x})g_n^{m_1}\cdots g_n^{m_{r'-1}}w'_{r'}(\overline{x})v_1 = \mu(n,\overline{x})v_2$$
		where the $m_i$ are all non-zero integers, the $w_i'$ are all non-trivial words and $\mu(n,\overline{x})$ is a scalar. Choosing $g_n$ so that the $\|g_n^{m_i}\|$ tend to $+\infty$ as before, dividing by the product of their norms and extracting as before, we get:
		$$w_1'(\overline{x})d_1'\cdots d_{r'-1}'w'_{r'}(\overline{x})v_1 = \mu(\overline{x})v_2$$
		where the $d_i'$ are rank one matrices and $\mu(\overline{x})$ is a regular function on $\mathcal{C}^{s-1}$. If it does not vanish identically then it is non-zero generically then $w_1'(\overline{x})\Ima(d_1 ')\subseteq \langle v_2\rangle$ as before, proving the induction step. If it is identically zero then for every $\overline{x}\in \mathcal{C}^{s-1}$ there is $i\in [\![2,r']\!]$ such that $w_{i}'(\overline{x})\Ima(d_i')\subseteq \ker(d_{i-1}')$, where by $\Ima(d_{r'}')$ we mean $\langle v_1\rangle$. Again by irreducibility one of these identities has to be true generically, proving the induction step.
			\end{proof}

			\begin{rem}\label{kbound}The word $w$ is a product of successive powers of letters in the alphabet $\{x_1,\ldots,x_s\}$. Note that carefully reviewing the above argument, we see that $k$ can be taken to be the absolute value of one of these powers. In particular, $k\leq \ell(w)$, the word length of $w$. 
			\end{rem}

We may now conclude the proof of \Cref{alternative}.

\begin{proof}[Proof of \Cref{alternative}] If the first item does not hold, then $w(x_1,\ldots,x_s)=1$ cuts out a proper algebraic subvariety of $\mathcal{C}^s$ according to \Cref{tits} for every nontrivial word $w$ in $s$ letters. A countable union of proper algebraic subvarieties of $\mathcal{C}^s$ cannot cover all of $\mathcal{C}^s$ by irreducibility of $\mathcal{C}^s$ (e.g. by Baire category). The conclusion follows.
\end{proof}

\begin{rem}Note that when $s \ge 3$ the two conditions are mutually exclusive:  If the first item holds, then $x^{-k}y^k$ belongs to a Borel subgroup for all $x,y\in \mathcal{C}$. In particular, $x^{-k}y^k$ and $x^{-k}z^k$ can never generate a free subgroup for any $x,y,z \in \mathcal{C}$. However, if $\langle x,y,z \rangle$ is free of rank $3$, then $\langle x^{-k}y^k, x^{-k}z^k\rangle$ is free of rank $2$. When $s=2$ the conditions can intersect: we leave it to the reader to verify that for every $g \notin B$ one can find $x,y \in gB$ that play ping-pong on $\P^1(\C)$, so that $\langle x,y \rangle$ is free of rank $2$.
		\end{rem}


    
    \section{Related sets in free groups}\label{secrelated}
    The goal of this section is to prove \Cref{uniform related ultrapower}. Let $s, L$ be positive integers. 	We define the notion of $(s,L)$-related sets as follows.
		\begin{defin}
		We say that a subset $\Omega$ of a group $\Gamma$ is $(s,L)$-related if every $s$-tuple of elements of $\Omega$ satisfies a non-trivial relation of length at most $ L$.
	\end{defin}

	The following is  consequence of \Cref{alternative}.
	\begin{prop}\label{related subsets and pk}
		Let $\Omega$ be an $(s,L)$-related subset of $\F$. There is a decomposition $\Omega = \Omega_1\cup \cdots \cup\Omega_d$, with $d \leq d(s,L):=2(2s-1)^{3L}L^{3}$, such that for every $i\in [\![1,d]\!]$, there is $k_i \in [\![1,L]\!]$ such that $p_{k_i}(\Omega_i)$ is contained in a coset of an abelian subgroup of $\F$.
	\end{prop}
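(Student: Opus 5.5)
The plan is to transport the problem into $\SL_2(\C)$, use the relations to trap $\Omega$ in an algebraic subvariety, apply \Cref{tits} to each irreducible component, and pull the resulting Borel cosets back to $\F$. I have not settled how to bound the number of components; that step is described at the end.

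\textbf{Setting up.} Fix an embedding $\F\hookrightarrow \SL_2(\Z)\subseteq\SL_2(\C)$; a free subgroup of rank $2$ exists, and $\F$ embeds in it. For each nontrivial word $w\in\F_s$ of length at most $L$, put
$$V_w=\{(x_1,\ldots,x_s)\in\SL_2(\C)^s : w(x_1,\ldots,x_s)=1\},$$
and let $Y$ be the union of the $V_w$. There are at most $N\le (2s-1)^L$ such words up to the obvious symmetries. The hypothesis that $\Omega$ is $(s,L)$-related says exactly that $\Omega^s\subseteq Y$. Since $Y$ is Zariski closed, the Zariski closure $X$ of $\Omega$ also satisfies $X^s\subseteq Y$, because the closure of $\Omega^s$ is $X^s$.

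\textbf{Applying \Cref{tits}.} Decompose $X=\mathcal{C}_1\cup\cdots\cup\mathcal{C}_d$ into irreducible components. Each $\mathcal{C}_j^s$ is irreducible and lies in the finite union $Y$, so it lies in a single $V_w$. Thus $w(\mathcal{C}_j^s)=\{1\}$, which is a coset of a Borel subgroup. \Cref{tits} then gives $k_j\ge1$, bounded by $\ell(w)\le L$ by \Cref{kbound}, and nonzero vectors $v_1,v_2$ such that $x^{k_j}v_1\in\C v_2$ for all $x\in\mathcal{C}_j$.

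Set $\Omega_j=\Omega\cap\mathcal{C}_j$. If $y_0,y\in p_{k_j}(\Omega_j)$, then $y_0^{-1}y$ fixes the line $\C v_1$. Hence $p_{k_j}(\Omega_j)\subseteq y_0(\F\cap B_{v_1})$, where $B_{v_1}$ is the Borel subgroup stabilizing $\C v_1$. The group $\F\cap B_{v_1}$ is solvable, so it is cyclic, and in particular abelian, by \Cref{solvable abelian}. This gives the required decomposition, apart from the bound on $d$.

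\textbf{Main obstacle: bounding $d$.} The difficulty is that $X$ itself need not have few components; for instance, it could contain many isolated points. So I would replace $X$ by a canonically defined closed set of controlled degree that still contains $\Omega$ and still satisfies the $s$-fold condition, built one coordinate at a time.

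Let $Z_1\subseteq \SL_2(\C)$ be the set of $x$ such that $(x,\omega_2,\ldots,\omega_s)\in Y$ for all $\omega_i\in\Omega$. This contains $\Omega$, and it is an intersection of slices
$$\{x : w(x,\omega_2,\ldots,\omega_s)=1\}.$$
The entries of $w(x,\omega)$ are polynomials of degree at most $L$ in the entries of $x$, cut out inside the $3$-dimensional variety $\SL_2$. By Bézout, each slice therefore has degree at most about $L^3$, and each $\omega$-slice of $Y$ is a union of at most $N$ such pieces.

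By Noetherianity the intersection defining $Z_1$ stabilizes after finitely many terms. I would then use an effective Bézout inequality (Heintz's version for intersections in dimension $3$) to bound the total degree, and hence the number of components, of $Z_1$ by something of order $(NL^3)^{O(1)}$, aiming at the stated $2(2s-1)^{3L}L^3$.

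Next I would repeat in the remaining coordinates, symmetrizing so that the final closed set $Z\supseteq\Omega$ satisfies $Z^s\subseteq Y$ and has at most $d(s,L)$ components. One then runs the argument above on the components of $Z$ instead of those of $X$. The delicate points are keeping the degree from compounding across the $s$ coordinates and handling the zero-dimensional components. Components of $X$ that are single points are harmless for the conclusion, but they must be counted in $d$. I expect the bookkeeping in this degree bound to be the main effort, and the precise constant may come out somewhat different from the one stated.
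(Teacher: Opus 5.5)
Your reduction to $\SL_2(\C)$, the choice of one word per irreducible component of the Zariski closure, the use of \Cref{tits} with $k_j\le \ell(w)\le L$ via \Cref{kbound}, and the pull-back of the Borel coset to a cyclic coset through \Cref{solvable abelian} are all correct, and this is what the paper does. The gap is the bound $d\le 2(2s-1)^{3L}L^3$. It is part of the statement, and you leave it unproved.

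The route you sketch also tracks the wrong invariant. You bound the \emph{degree} of each slice $\{x : w(x,\omega_2,\ldots,\omega_s)=1\}$ and then intersect. Noetherianity only says that finitely many slices suffice, with no control on how many. Iterating $\deg(A\cap B)\le\deg A\cdot\deg B$ over an unbounded number of slices gives no uniform bound. This is the same compounding you anticipate across the $s$ coordinates.

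The missing idea is to control the degree of \emph{defining polynomials}, which is preserved under slicing and under arbitrary intersections. Each $V_w$ is cut out by polynomials of degree $\le L$ in the matrix entries. Hence $Y$ is cut out by products of degree $\le D:=(2s-1)^L L$.

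Let $V\supseteq\Omega$ be the common zero set of all polynomials of degree $\le D$ vanishing on $\Omega$. Suppose $P$ has degree $\le D$ and vanishes on $Y\supseteq\Omega^s$. For fixed $\omega_2,\ldots,\omega_s\in\Omega$, the polynomial $P(\cdot,\omega_2,\ldots,\omega_s)$ has degree $\le D$ and vanishes on $\Omega$, hence on $V$. Repeating this one coordinate at a time shows that $P$ vanishes on $V^s$, so $V^s\subseteq Y$, with no compounding at all. The paper phrases this with a maximal $\mathcal{C}$ satisfying $\mathcal{C}^s\subseteq Y$, which must then coincide with its own degree-$D$ closure.

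Now run your argument on the irreducible components of $V$ instead of $X$. The relative B\'ezout inequality of Heintz--Schnorr, $\deg V\le\deg(\SL_2)\,D^{\dim\SL_2}$, applied once, gives $d\le\deg V\le 2D^3=2(2s-1)^{3L}L^3$, which is exactly the stated bound. Isolated points need no separate treatment, since each component contributes at least $1$ to $\deg V$.

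Your set $Z_1$ is in fact also cut out by polynomials of degree $\le D$, namely the slices $P(\cdot,\omega)$. So your coordinate-by-coordinate construction can be salvaged in the same way, but the single closure $V$ does everything at once.
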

	\begin{proof}
		Consider $\F$ embedded into $\SL_2(\C)$. Pick $\Omega$ an $(s,L)$-related subset of $\F$, and $\mathcal{C}$ its Zariski closure. Note that $\mathcal{C}$ too is $(s,L)$-related. Write $\Omega_i = \Omega\cap \mathcal{C}_i$, where the $\mathcal{C}_i$ are the irreducible components of $\mathcal{C}$. By irreducibility of $\mathcal{C}_i^s$ there is a non-trivial word $w$ in $s$ letters with length at most $L$ such that $w(\mathcal{C}_i^s) = \{1\}$. Applying \Cref{alternative} there is $k_i \geq 1$ with $k_i \leq L$ such that $p_{k_i}(\Omega_i) \subset p_{k_i}(\mathcal{C}_i)$ is contained in a coset of a Borel subgroup of $\SL_2(\C)$. In particular the group generated by the $xy^{-1}$ for $x,y\in p_{k_i}(\Omega_i)$ is a solvable subgroup of $\F$, hence abelian by \Cref{solvable abelian}. So $p_{k_i}(\Omega_i)$ is contained in a coset of an abelian subgroup of $\F$, hence in a coset of a maximal abelian subgroup, i.e. a centralizer.
		
		We can bound $d$ as follows. Note that we may replace $\mathcal{C}$ by a maximal algebraic subvariety of $\SL_2$ such that $\mathcal{C}^s \subset W$, where $W:=\cup_{\ell(w)\leq L} w^{-1}(1)$. Then $d$  is bounded by the number of irreducible components of $\mathcal{C}$ and hence by its degree (defined as the sum of the degrees of its components).  Each $w^{-1}$ is cut by polynomials in the matrix entries of degree at most $L$, so $W$ is cut by polynomials of degree at most $D:=(2s-1)^L L$. It is easy to check that the algebraic set $V$ cut out by the polynomials of degree at most $D$ that vanish on $\mathcal{C}$ will also satisfy $V^s \subset W$, so that $V=\mathcal{C}$ by maximality of $\mathcal{C}$.  It follows from the (relative) Bezout inequality (cf. \cite[Prop. 2.3]{heintz-schnorr}) that for closed subvarieties $X,Y$ of $\C^n$ we have $\deg(X \cap Y) \leq \deg(Y) D^{\dim Y}$ if $X$ is cut out by polynomials of degree at most $D$. Thus $d \leq \deg \mathcal{C} \leq \deg(\SL_2) D^3 = 2D^3.$
	\end{proof}


	Recall the following theorem from \cite{lyndon1962equation}.
	\begin{theo}[Lyndon-Schützenberger]\label{lyndon schutzenberg}
		Let $n,m,p\in \N$ all greater than or equal to $2$. If $a,b,c\in \F$ satisfy $a^n = b^mc^p$, then there is an element $u\in \F$ such that $a,b,c\in \langle u \rangle$.
	\end{theo}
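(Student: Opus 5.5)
The plan is a combinatorial proof in the free group using reduced words, with the Fine--Wilf periodicity lemma as the main tool: if a word has periods $p$ and $q$ and length at least $p+q-\gcd(p,q)$, then it has period $\gcd(p,q)$. The statement is invariant under simultaneous conjugation, so we may conjugate $a$ to be cyclically reduced. If $b$ and $c$ commute, then by the centralizer lemma of the Preliminaries they lie in a common cyclic group $\langle u\rangle$. Then $a^n\in\langle u\rangle$ commutes with $u$, so $a$ commutes with $u$ and lies in $C_\F(u)$. Passing to a generator of that cyclic centralizer gives the conclusion. So the whole proof reduces to showing that $[b,c]\neq 1$ is impossible. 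We may also assume $a,b,c\neq 1$, since the degenerate cases reduce to equations like $b^m=c^{-p}$, which force commutation.

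Write $b=xb'x^{-1}$ and $c=yc'y^{-1}$ with $b',c'$ cyclically reduced. Then $b^m=xb'^mx^{-1}$ and $c^p=yc'^py^{-1}$ as reduced words. The first key step is to bound the cancellation in the product $b^m\cdot c^p$. Suppose the cancelled segment contains a subword of the periodic word $b'^{\pm\infty}$ and of $c'^{\pm\infty}$ of length at least $|b'|+|c'|$. Then Fine--Wilf gives a common period, so $b'$ and $c'$ (suitably conjugated) are powers of a common cyclic word. Combined with the matching of conjugators, this yields $[b,c]=1$, a contradiction. Hence the cancellation is short: at most about $|b'|+|c'|$ beyond the conjugator parts.

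Consequently the reduced form of $b^m c^p$ contains a subword $b'^{\,m-1}$, up to cyclic shift, essentially uncancelled, and likewise $c'^{\,p-1}$. Now $a^n$ is the reduced word $a^n$ with $a$ cyclically reduced, so it has period $|a|$ and length $n|a|$. The second key step compares lengths. The equation gives roughly $n|a|\approx m|b'|+p|c'|+2(|x|+|y|)-2(\text{cancellation})$, so one of the two blocks, say $b'^{\,m-1}$ or its $c$-analogue, occupies a large fraction of the period-$|a|$ word. Applying Fine--Wilf to the long common subword of $a^{\infty}$ and $b'^{\infty}$ shows that $a$ and $b'$ are powers of conjugate elements. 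One then propagates this: $a$ commutes with a conjugate of $b$, and substituting back gives $c^p=b^{-m}a^n$. From this one extracts that $c$ also commutes with the same element, contradicting $[b,c]\ne1$.

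The main obstacle is making the length bookkeeping work in the tight cases, especially $m=p=2$ or $n=2$. There the overlap guaranteed by the lengths may fall just short of the Fine--Wilf threshold $|a|+|b'|-\gcd$. My first attempt would be to choose, among $b'^{\,m}$ and $c'^{\,p}$, the longer block and use the full period-$|a|$ structure of $a^n$ on both sides. Here I would exploit that $a^n$ is also a cyclic power, so one may rotate cyclically and wrap around. If that fails for the boundary cases, I would fall back on the classical trick of replacing the equation by one in the cyclic conjugacy class and inducting on $|a|+|b|+|c|$ via a Nielsen-type shortening. Alternatively, I would first treat $n\geq 3$ or $m,p\ge3$ and handle the remaining small exponents by direct case analysis.
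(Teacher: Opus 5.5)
Your reduction to the case $[b,c]\neq 1$ is fine. So is your first step: a cancellation of length at least $|b'|+|c'|$ between the periodic parts of $b^m$ and $c^p$ forces $[b,c]=1$. The gap is in the second step, which is where the whole content of the theorem lies, and which your proposal leaves open.

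\textbf{First failure: long conjugators.} The inference that one of the blocks $b'^{\,m-1}$, $c'^{\,p-1}$ ``occupies a large fraction'' of $a^n$ is false when the conjugators dominate. Take $b=b'$ and $c=zc'z^{-1}$, with $b'^{\,m}zc'^{\,p}z^{-1}$ reduced and cyclically reduced, and $|z|$ large compared with $n(m|b'|+p|c'|)$. Then $|a|=(m|b'|+p|c'|+2|z|)/n$ is far larger than both periodic stretches, so no Fine--Wilf comparison of $a$ with $b'$ or $c'$ is available. This case needs a different idea. For instance, $z$ and $z^{-1}$ are both subwords of $a^{\infty}$, which forces $a$ to be conjugate to $a^{-1}$ once $|z|\ge |a|$. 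But for $n=2$ one always has $|a|=|z|+\tfrac12(m|b'|+p|c'|)>|z|$, so even this does not close the case.

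\textbf{Second failure: small exponents.} Even with trivial conjugators, the overlap you can guarantee falls below the Fine--Wilf threshold. Take $b=b'$, $c=c'$, $m=p=n=2$ and $|b'|=|c'|=10$. Suppose the last $15$ letters of $b'^{\,2}$ cancel against the first $15$ letters of $c'^{\,2}$; your first step does not exclude this, since $15<20$. Then $|a|=5$, and the $b'$-periodic stretch surviving in $a^2$ has length $5$. Fine--Wilf needs $|a|+|b'|-\gcd(|a|,|b'|)=10$.

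So your length bookkeeping yields a contradiction only for large exponents. The paper needs exactly the small ones: in Corollary~\ref{powers} the equation is $v^d=x_1^{-k}x_3^{k}$ with $d,k\ge 2$, possibly both equal to $2$. Your fallbacks (``Nielsen-type shortening'', ``direct case analysis'') are placeholders, not arguments.

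\textbf{A smaller imprecision.} Knowing that ``$a$ and $b'$ are powers of conjugate elements'', or that $a$ commutes with some conjugate of $b$, does not let you run the substitution $c^p=b^{-m}a^n$. You need $[a,b]=1$ for the same $b$. That requires the long common periodic stretch to occur at the same position in the reduced word (an overlap of the axes of $a$ and $b$ in the Cayley tree), not merely as a common subword.

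For comparison, the paper does not prove this statement. It quotes it from Lyndon and Sch\"utzenberger, points to Baumslag's generalisation and Louder's geometric proof, and uses it only as a black box in Corollary~\ref{powers}. Citing it is legitimate. As a self-contained proof, however, your proposal stops exactly where the known proofs have to do genuine work.
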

	See also \cite{baumslag} for a generalisation and \cite[Theorem 1.1]{louder} for a recent geometric proof. The corollary of importance to us is the following.
	\begin{coro}\label{powers}
		Let $g,u\in \F$ be two non-commuting elements and let $k\geq 2$ be an integer. There can be at most two  elements $x\in \F$ such that $x^k \in gC_\F(u)$.
	\end{coro}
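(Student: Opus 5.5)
Suppose $x^k \in gC_\F(u)$ for three distinct elements $x_1,x_2,x_3$. The plan is to apply Lyndon--Schützenberger to quotients of these powers. Let $v$ be a generator of the cyclic group $C_\F(u)$ (Lemma 1(i)); replacing $u$ by $v$ we may assume $C_\F(u)=\langle u\rangle$. Write $x_i^k = g u^{a_i}$ with $a_i\in\Z$. Then
\[
x_i^{-k}x_j^{k} = u^{a_j-a_i}, \qquad\text{so}\qquad x_j^k = x_i^k\, u^{a_j-a_i}.
\]
If $|a_j-a_i|\ge 2$ we can write $u^{a_j-a_i}=c^p$ with $c=u^{\pm1}$ and $p=|a_j-a_i|\geq 2$. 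If $|a_j-a_i|=1$ we cannot, and this is where the count of two comes from; see below.

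\textbf{Case $|a_j-a_i|\ge 2$.} The equation $x_j^k = x_i^k c^p$ has $k,k,p\ge 2$, so Theorem \ref{lyndon schutzenberg} gives an element $w$ with $x_i,x_j,c\in\langle w\rangle$. Hence $x_i^k$ commutes with $u$, so $g u^{a_i}$ commutes with $u$, and therefore $g$ commutes with $u$. This contradicts the hypothesis. So any two solutions have exponents with $|a_i-a_j|\le 1$.

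\textbf{Case $a_i=a_j$.} Then $x_i^k=x_j^k$. In a free group, $k$-th roots are unique: both elements lie in the cyclic group $C_\F(x_i^k)$, which is torsion-free, and this forces $x_i=x_j$. Note that $x_i^k\neq 1$, since otherwise $g\in C_\F(u)$.

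\textbf{Conclusion.} The exponents $a_1,a_2,a_3$ are pairwise distinct and pairwise at distance at most $1$. This is impossible for three integers: at most two integers can be pairwise at distance at most $1$ while being distinct. So there are at most two solutions $x$.

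The main subtlety is the distance-$1$ case, which Lyndon--Schützenberger cannot exclude. The argument avoids needing to exclude it, because the pigeonhole step only has to rule out three exponents. The remaining checks are routine: that $x_i$ lying in a cyclic group together with $u$ really forces $[g,u]=1$, and that the normalization of $u$ to a generator of its centralizer is harmless.
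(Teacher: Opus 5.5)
Your proof is correct and follows essentially the same route as the paper: write $x_i^k = g u^{a_i}$ with $u$ generating the cyclic centralizer, use uniqueness of $k$-th roots to get pairwise distinct exponents, and apply Lyndon--Sch\"utzenberger to a pair whose exponents differ by at least $2$ to force $[g,u]=1$. The paper instead sorts the three exponents and applies the theorem directly to the extreme pair, in the form $v^{d} = (x_1^{-1})^k x_3^k$; this is the same as your pigeonhole step.
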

	\begin{proof}
		Assume there are three distinct ones: $x_1,x_2,x_3$. Then the $g^{-1}x_i^k$ all lie in the cyclic subgroup $C_\F(u)$, hence there is $v\in C_\F(u)$ and $n_1,n_2,n_3\in \Z$ such that $x_i^k = gv^{n_i}$ for every $i$, and $v\neq 1$, because the $x_i^k$ are distinct (indeed: if $x^k=y^k$ then $x$ commutes with $x^k=y^k$ hence with $y$, which reduces this statement to the case where $x$ and $y$ commute). Hence the $n_i$ are distinct. Without loss of generality we may assume $n_1 < n_2 < n_3$.  Then $d := n_3 - n_1 \geq 2$, and  $(x_1^{-1})^k x_3^k = v^{-n_1+n_3} = v^d$. Therefore, by the above \Cref{lyndon schutzenberg}, we have that $x_1,x_3$ and $v$ commute. Since $v \neq 1$ and $v$ commutes with $u$, we have $C_\F(v) = C_\F(u)$, so $x_1\in C_\F(u)$, hence $g = x_1^kv^{-n_1}\in C_\F(u)$, which contradicts the fact that $g$ and $u$ are non-commuting.
	\end{proof}

	We may now prove \Cref{uniform related ultrapower}.
	
	\begin{proof}[Proof of \Cref{uniform related ultrapower}]  Let $\Omega$ be such a set. By \Cref{related subsets and pk} $\Omega$ is the union of at most $d(s,L)$ subsets $\Omega_i$ for which there is some $k_i \ge 1$ such that $p_{k_i}(\Omega_i)$ is contained in a cyclic coset in $\F$. By \Cref{powers}, either $|\Omega_i|\leq2$ or $k_i=1$ or $p_{k_i}(\Omega_i)$ is contained in an abelian centralizer. In the latter case, by commutative transitivity of $\F$, $\Omega_i$ itself is contained in an abelian centralizer. The result follows. 
	\end{proof}

\section{Proof of \Cref{main theorem}}\label{secproof main}

	We may now complete the proof of \Cref{main theorem}.

   	\begin{proof}
		 Let $\varepsilon \in (0,1)$. Pick the smallest integer $s\geq 2$ such that $b_s:=\frac{2\sqrt{s-1}}{s} <\frac{\varepsilon}{4}$ and let $L = 2M_{1/2,s}$ as in \Cref{small relation if big norm}.  Let $\F$ be a nonabelian free group and $\mu$ a probability measure on $\F$ with $\|\lambda(\mu)\|> {\varepsilon}$. We may find unit vectors $v,w$ in $L^2 \F$ such that $\langle \lambda(\mu) v, w \rangle >\varepsilon$.  Let $\Omega:=\{g \in \F, \langle \lambda(g) v, w \rangle > 2b_s\}$. Note that 
		 $$\varepsilon < \langle \lambda(\mu) v, w \rangle  \leq 2b_s \mu(\F \setminus \Omega) + \mu(\Omega)$$ so $\mu(\Omega)>\varepsilon/2$. For every $g_1,\ldots,g_s \in \Omega$ we have $\|\lambda(\mu_S)\| \ge \langle \lambda(\mu_S)v,w \rangle >2b_s$, where $\mu_S=(\delta_{g_1}+\ldots+\delta_{g_s})/s$. So \Cref{small relation if big norm} implies that $\Omega$ is $(s,L)$-related. By \Cref{uniform related ultrapower}, $\Omega$ is contained in at most $c(s,L)$ cyclic cosets in $\F$. In particular, one of them has $\mu$-mass at least $\mu(\Omega)/c(s,L)\ge \varepsilon/2c(s,L)$. Since $s \leq \lceil (8\varepsilon^{-1})^2 \rceil$, and $L$ depends only on $s$, this gives the desired modulus $\delta=\delta(\varepsilon)>0$.
\end{proof}

We end by including the details of the (routine) proof of \Cref{frf}: 

	\begin{proof} Let $\delta=\delta_\Gamma(\eps)>0$ be a (strictly increasing) modulus for the one-percent property for  $\Gamma$.  Pick a probability measure $\mu$ on $L$ such that $\|\lambda(\mu)\|> \varepsilon$. Let $t=\|\lambda(\mu)\|-\varepsilon$. For each small $\eta>0$, we may find a finitely supported probability measure $\mu_1$ on $L$ with  $\|\mu - \mu_1\|_1 < \eta$. Then $\|\lambda(\mu)-\lambda(\mu_1)\| \leq \eta$, so  $\|\lambda(\mu_1)\|\geq \varepsilon +t - \eta$. Let $S=\Supp(\mu_1)^{\pm 1} \cup \{1\}$ and pick a group homomorphism $\varphi : L\to \Gamma$ that is injective on $S^{8}$  and push forward the measure $\mu_1$ to a probability measure $\nu$ on $\Gamma$. We have $\|\lambda(\nu)\|\ge \|\lambda(\mu_1)\|$. Thus there is an abelian subgroup $A$ of $\Gamma$ and $g \in \Gamma$ such that $\nu(gA)\geq\delta_\Gamma(\varepsilon+t- \eta)$. Write $\Sigma := S\cap \varphi^{-1}(gA)$. Then $\mu(\Sigma) = \mu_1(\Sigma) +( \mu( \Sigma) - \mu_1(\Sigma)) \geq  \mu_1(\Sigma)- \eta\geq \delta_\Gamma(\varepsilon+t- \eta) - \eta$. If $\eta>0$ is chosen small enough, this is $\ge \delta_\Gamma(\varepsilon)$. It remains to show that $\Sigma$ is contained in a coset of an abelian subgroup in $L$. To see this, note that for every $x,y,z,t\in \Sigma$ we have $\varphi([x^{-1}y,z^{-1}t]) = 1$ hence we also have $[x^{-1}y,z^{-1}t] = 1$ as $\varphi$ is injective on $S^8$. Therefore, the subgroup $B$ of $L$ generated by the quotients $x^{-1}y$ for $x,y\in \Sigma$ is abelian, and clearly one has $\Sigma\subset xB$ for any $x\in \Sigma$ as desired.
	\end{proof}

	\section{Further remarks}\label{further}
	
	We include here some further remarks obtained with some AI help.
	
	\begin{rem}[An explicit bound on the one-percent modulus]\label{explicit} By our proof of \Cref{main theorem}, a lower bound for the one-percent modulus of free groups depends only on the modulus of uniform exactness of free groups, namely the quantity $M_{1/2,s}$ from \eqref{ue}. Ozawa's brilliant proof of uniform exactness for free groups makes key use of ultraproducts, and is thus non effective. However, Ozawa \cite{ozawa-personal} shared with the first author on September 24, 2026 a new argument obtained a day earlier in conversation with ChatGPT 6 Astra, which yields the following explicit modulus of uniform exactness: $M_{\delta,s} \leq (4s)^6/\delta^4$. In view of the proof of \Cref{main theorem} and the bound for $c(s,L)$ obtained in \Cref{uniform related ultrapower}, this gives $L=2M_{1/2,s} \leq (8s)^6$ and hence the following explicit lower bound on the one-percent modulus for free groups: $\delta(\varepsilon) \ge 2^{-(8s)^7} \ge \exp(-8^{21} \varepsilon^{-14})$. We suspect that a much better bound ought to hold, perhaps even a polynomial bound in $\varepsilon$. 
	\end{rem}

	\begin{rem}[The ninety-nine percent property]\label{ninety} One may define an analogous \emph{ninety-nine percent property} of a group $\Gamma$, by asking that for every $\varepsilon>0$ there is $\delta>0$ such that every probability measure $\mu$ on $\Gamma$ with $\|\lambda(\mu)\|\ge 1-\delta$ gives $\mu$-mass at least $1-\varepsilon$ to a coset of an amenable subgroup. It was shown in \cite[Theorem 7.6]{becker-breuillard} that $\GL_d(\C)$, viewed as a discrete group, has the ninety-nine percent property. It turns out that the one percent property implies the ninety-nine percent property in any group:
	
	\begin{prop} If $\Gamma$ is a discrete group with the one percent property (with modulus $\delta_1(\varepsilon)>0$), then it has the ninety-nine percent property (with modulus $\delta_{99}(\varepsilon)=\varepsilon \delta_1(1/2)^4/2000$).
	\end{prop}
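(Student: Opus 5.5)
The plan is to turn the hypothesis $\|\lambda(\mu)\|\ge 1-\delta$ into an almost-invariant vector, and then apply the one percent property to auxiliary measures built from the almost-invariant part of $\mu$. Throughout, $\delta_1:=\delta_1(1/2)$.

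\emph{Step 1: an almost-invariant vector.} Replacing $\mu$ by $\widetilde\mu=\mu*\mu^*$ would square the norm but destroy the coset structure, so I work with $\mu$ directly. Choose unit vectors $v,w$ with $\langle\lambda(\mu)v,w\rangle\ge 1-2\delta$. Replacing them by $|v|,|w|$ does not decrease this quantity, as in the proof of the first proposition, so we may assume $v,w\ge 0$. Since
$$\langle\lambda(g)v,w\rangle = 1-\tfrac12\|\lambda(g)v-w\|^2,$$
Markov's inequality shows that
$$\Omega:=\{g:\ \|\lambda(g)v-w\|^2\le\eta\}$$
satisfies $\mu(\Omega)\ge 1-4\delta/\eta$. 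For $x,y\in\Omega$ the triangle inequality gives $\|\lambda(x^{-1}y)v-v\|\le 2\sqrt\eta$. Consequently every probability measure $\nu$ supported on $\Omega$, or on $\Omega^{-1}\Omega$, has $\|\lambda(\nu)\|\ge 1/2$ once $\eta$ is a small absolute constant. By the one percent property, every such $\nu$ then charges some coset of an amenable subgroup with mass at least $\delta_1$.

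\emph{Step 2: locating a heavy coset.} Take $\nu=\mu_{|\Omega}/\mu(\Omega)$. Step 1 produces an amenable subgroup $A$ and a coset $gA$ with $\mu(gA\cap\Omega)\ge\delta_1/2$. Let $R:=\Omega\setminus gA$ and suppose, towards a contradiction, that $\mu(R)\ge\varepsilon/2$. The natural auxiliary measure is
$$\nu' \propto (\mu_{|gA\cap\Omega})^* * \mu_{|R} \quad\text{(possibly symmetrized once more)},$$
which is supported on $\Omega^{-1}\Omega$. Applying the one percent property to $\nu'$, and then a second time to a similar product measure, gives a coset $hB$ of an amenable subgroup that is hit with probability at least about $\delta_1^2$ by $\nu'$. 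Accounting for two such applications, each contributing two normalizations, is where I expect the factor $\delta_1^4$ in $\delta_{99}$ to come from. The aim is then to combine $A$ and $B$ into a single coset of an amenable subgroup, which would enlarge the best coset mass by a definite amount, of order $\delta_1^4\varepsilon$.

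\emph{Step 3: linear gain and conclusion.} Rather than iterating, I would prove a quantitative gap inequality. Write $\mu=(1-t)\alpha+t\beta$, where $\alpha$ is $\mu$ conditioned on the best coset (intersected with $\Omega$) and $t$ is the remaining mass. The goal is
$$\|\lambda(\mu)\|\le 1-c\,\delta_1^4\, t$$
for an absolute constant $c$. Given this, $\|\lambda(\mu)\|\ge 1-\delta$ forces $t\le\delta/(c\delta_1^4)$. The choice $\delta=\varepsilon\delta_1^4/2000$ then makes $t$, together with the loss $4\delta/\eta$ from $\Omega$, at most $\varepsilon$.

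\emph{Main obstacle.} The hard point is Step 2/3: in an arbitrary group I have no structural information about amenable subgroups, such as how two amenable cosets can overlap or be merged. So the contradiction must be purely spectral. Concretely, I must show that if the mass $\beta$ off the best coset is not small, then $v$ cannot be almost invariant under both $\alpha$ and $\beta$ simultaneously. The one percent property, applied to $\nu'$, should guarantee that $\beta$ is forced to move $v$ by an amount of order $\delta_1^2$ in $L^2$, giving $\|\lambda(\beta)v - v\|\gtrsim\delta_1^2$ and hence the gap $\delta_1^4 t$ after squaring. My first attempt at extracting this lower bound on displacement would be to combine the hypothesis on $\nu'$ with the maximality of $\mu(gA)$; I am not certain this step goes through as stated.
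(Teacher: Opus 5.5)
There is a genuine gap, and it sits where you suspect it. Nothing in Steps 2--3 upgrades the coset of mass $\ge\delta_1$ from Step 1 to a coset of mass $\ge 1-\varepsilon$.

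The gap inequality $\|\lambda(\mu)\|\le 1-c\,\delta_1^4 t$ of Step 3 is not an intermediate step. With $t=1-\sup_{gA}\mu(gA)$, it is exactly the ninety-nine percent property with a linear modulus, i.e.\ the statement to be proved.

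The mechanism you propose for it does not work in general:
\begin{itemize}
\item Two amenable subgroups $A,B$ need not lie in a common amenable subgroup. In $\F$, two non-commuting cyclic subgroups generate a nonabelian free group. So the cosets $gA$ and $hB$ cannot be ``combined''.
\item The one percent property, applied to $\nu'$ or to any other measure, only ever outputs \emph{some} coset of mass $\ge\delta_1$. It cannot give a lower bound on $\|\lambda(\beta)v-v\|$. Via the contrapositive, that would require knowing that some measure charges no amenable coset, which is precisely what you cannot control.
\item The hypothesis $\|\lambda(\mu)\|\ge 1-\delta$ is used only to make $\mu(\Omega)$ close to $1$. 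Once you apply the one percent property at level $1/2$, the smallness of $\delta$ plays no further role.
\end{itemize}

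The missing ideas are the following; together they form the paper's proof.
\begin{enumerate}
\item \emph{Symmetrizing does not lose the coset structure.} For a subgroup $H$ one has $\widetilde\mu(H)=\sum_{Hx\in H\backslash\Gamma}\mu(Hx)^2\le\max_x\mu(Hx)$, and $Hx=x(x^{-1}Hx)$ is a coset of an amenable subgroup. So it suffices to find an amenable \emph{subgroup} $H$ with $\widetilde\mu(H)\ge 1-2000\,\delta\,\delta_0^{-4}$, where $\delta_0:=\delta_1(1/2)$.
\item \emph{Amplify by convolution powers.} Take $N$ maximal with $(1-2\delta)^N\ge 1/2$; then $N\ge 1/(40\delta)$ and $\|\lambda(\widetilde\mu^N)\|\ge 1/2$. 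The one percent property gives an amenable $A$ with $\widetilde\mu^N(gA)\ge\delta_0$.
\item \emph{Work in $\ell^2(\Gamma/A)$ rather than $L^2\Gamma$.} Let $P=\lambda_{\Gamma/A}(\widetilde\mu)\ge 0$. The image $\nu=P^N\delta_A$ of $\widetilde\mu^N$ has an atom of mass $\ge\delta_0$. By the spectral theorem, $\sum_g\widetilde\mu(g)\|\lambda_{\Gamma/A}(g)\nu-\nu\|_2^2=2\langle P^{2N}(1-P)\delta_A,\delta_A\rangle\le 2/N$. This is where $\delta$ enters linearly.
\item \emph{Construct the amenable subgroup instead of merging cosets.} Order the atoms of $\nu$ as $M_1\ge M_2\ge\cdots$. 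A pigeonhole argument gives $k\le 3/\delta_0$ with $M_k-M_{k+1}>\delta_0^2/5$. Let $H$ be the setwise stabilizer of the finite set $E=\{x:\nu(x)\ge M_k\}$. It is amenable, since it contains the pointwise stabilizer (an intersection of conjugates of $A$) with finite index. Every $g\notin H$ moves $\nu$ by at least $\delta_0^2/5$ in $\ell^2$.
\end{enumerate}

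Hence $(1-\widetilde\mu(H))\,\delta_0^4/25\le 2/N\le 80\delta$, which gives the stated modulus. In particular, the factor $\delta_0^4$ comes from squaring this gap, not from two applications of the one percent property.
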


	\begin{proof}The following argument was suggested by ChatGPT 6 Astra in September 2026. Suppose that $\mu$ is a probability measure on $\Gamma$ such that $\|\lambda(\mu)\|\ge 1-\delta$. Then $\|\lambda(\widetilde{\mu})\|=\|\lambda(\mu)\|^2 \ge (1-\delta)^2\ge 1-2\delta$. Pick an integer $N$ such that $(1-2\delta)^N \ge 1/2$. Then $\|\lambda(\widetilde{\mu}^N)\| \ge 1/2$ and we may apply the one percent property of $\Gamma$ with $\varepsilon=1/2$ and conclude that there is a coset $gA$ of an amenable subgroup $A$ of $\Gamma$ such that $\widetilde{\mu}^N(gA)\ge \delta_0$, where $\delta_0=\delta_1(1/2)>0$ depends only on $\Gamma$. Without loss of generality we may assume that $\delta_0<1/2$. 
	
	Consider now the action of $\Gamma$ on the coset space $\Gamma/A$, and let $\nu$ be the push-forward of $\widetilde{\mu}^N$ under the map $\Gamma \to \Gamma/A$. Then $\nu$ has an atom of mass $\ge \delta_0$. List the $\nu$-mass of atoms in decreasing order $M_1 \geq M_2 \geq \ldots \geq M_n \geq\ldots$. Then $\sum M_i \leq 1$ and $M_1 \ge \delta_0$. It follows that there is $k\leq k_0=\lfloor 3/\delta_0 \rfloor$, such that $M_k-M_{k+1} > c_0:=\delta_0^2/5$. Indeed, if not we would have $1\ge M_1+\ldots +M_{k_0} \ge M_1 + M_1-c_0 + \ldots + M_1-(k_0-1)c_0 = k_0M_1-k_0(k_0-1)c_0/2$. Noting that $k_0\delta_0 \ge 2$ this would give $1 \ge 2-k_0^2c_0/2$, so $k_0^2c_0\ge 2$. However  $k_0^2c_0=(k_0\delta_0)^2/5\leq 9/5<2$. For this $k$ let
	$$E:=\{x\in \Gamma/A, \nu(x) \ge M_k\}$$
	and set $H=\{g \in \Gamma, gE=E\}$. Note that $E$ is finite, and that $H$ contains the pointwise stabiliser of $E$ (which is a subgroup of  a conjugate of $A$) as a subgroup of finite index. In particular $H$ is amenable. Moreover if $g \notin H$, then there is $x \in E$ such that $g^{-1}x \notin E$. In particular $\|\lambda_{\Gamma/A}(g) \nu - \nu\|_2 \ge |\lambda_{\Gamma/A}(g) \nu(x) - \nu(x)|=|\nu(g^{-1}x)-\nu(x)| \ge M_k-M_{k+1} \ge \delta_0^2/5$.
	
	It follows that $$\sum_{g \in \Gamma} \widetilde{\mu}(g)\|\lambda_{\Gamma/A}(g) \nu - \nu\|_2^2 \ge (1-\widetilde{\mu}(H))  \delta_0^4/25.$$
	
	On the other hand a simple spectral argument allows to bound the left hand side as follows: expanding the square we have
	 $$\sum_{g \in \Gamma} \widetilde{\mu}(g)\|\lambda_{\Gamma/A}(g) \nu - \nu\|_2^2 = 2\langle (1-\lambda_{\Gamma/A}(\widetilde \mu))\nu, \nu \rangle_{L^2(\Gamma/A)} = 2\langle P^{2N}(1-P) \delta_A,\delta_A\rangle$$
	 where $P=\lambda_{\Gamma/A}(\widetilde{\mu})$ is a non-negative self-adjoint operator. By the spectral theorem, $$\|P^{2N}(1-P)\|\leq \sup_{t \in [0,1]} t^{2N}(1-t)\leq 1/N.$$ Hence
	 $$\widetilde{\mu}(H) \ge 1- 50/N\delta_0^4.$$
	Choose $N$ maximal such that $(1-2\delta)^N \ge 1/2$. Assuming (as we may) that $\delta<1/8$, this yields $N \ge 1/(40\delta)$. Hence  $\max_{g \in \Gamma} \mu(Hg) \ge \widetilde{\mu}(H) \ge 1- 2000\delta \delta_0^{-4}$. This gives the ninety-nine percent property with modulus $\delta_{99}(\varepsilon)=\varepsilon \delta_0^4/2000$. 
	\end{proof}
\end{rem}

\begin{exem}\label{BS} The  Baumslag-Solitar groups $BS(n,m)=\langle a,t | t^{-1}a^nt=a^m\rangle$ with $\gcd(|n|,|m|)=1$ and $\min\{|n|,|m|\}>1$ are free-by-solvable, yet they do not have the ninety-nine percent property (let alone the one percent property). Indeed, their Cayley graphs with generating set $S_k=\{1,t^{\pm 1}, a^{\pm n^k}\}$ converge when $k$ tends to infinity to the Cayley graph of their metabelian quotient, image of $a \mapsto (x \mapsto x+1)$ and $t \mapsto (x \mapsto \frac{n}{m}x)$ in the affine group of $\Q$. See \cite[\S 3]{osin}, \cite[Prop. 13.3]{arzhantsevaetal}, so $\|\lambda(\mu_k)\| \to 1$, where $\mu_k$ is the symmetric measure on $S_k$.
\end{exem}

\begin{exem}\label{resfree} The infinite direct sum $\Gamma=\bigoplus_{i\ge 1} \F^{(i)}$, where each $\F^{(i)}$ is a nonabelian free group $\F=\langle a,b \rangle$ is an example of a group with the ninety-nine percent property (as follows easily from the fact that two elements of $\Gamma$ either commute or generate a nonabelian free subgroup) but \emph{without the one percent property}. To see this,  take the measure $\mu_n$ which is the uniform average of $n$ copies of the uniform measure $\mu$ on a  long geometric progression $bP$, where $P:=\{a^k, |k|\leq L\}$, so $\mu_n$ is supported on the union of the ``coordinate subgroups'' $\F^{(i)}$, $i\in [\![1,n]\!]$.   Cosets of amenable subgroups of $\Gamma$ are contained in a direct sum of cyclic cosets $C:=\bigoplus_{i \ge 1} C^{(i)}$. If such a coset intersects two distinct coordinate subgroups nontrivially, then it must be the coset of the identity hence a direct sum of cyclic subgroups. However $\mu$ gives mass at most $1/|P|$ to any abelian subgroup of $\F$. Hence $\mu_n(C)\le  \max\{1/n,1/|P|\}$. However, $\|\lambda(\mu_n)\|\ge 1/4$: indeed, take $\xi=(1_{P}+1_{bP})/2 \in L^2(\F)$ and note that $\langle \lambda(\mu)\xi, \xi \rangle \ge \|\xi\|^2/4$ so $\langle \lambda(\mu_n)\eta, \eta \rangle \ge \|\eta\|^2/4$ for $\eta:=\xi\otimes \cdots \otimes \xi$. Note finally that $\Gamma$ is residually free, so \Cref{frfg} cannot be upgraded to residually free groups.
\end{exem}

The following is a standard way to establish the ninety-nine percent property, as done e.g. in \cite[\S 6.3]{becker-breuillard}. It applies to \Cref{resfree} above and \Cref{fgresfree} below.

\begin{prop} Suppose a discrete group $\Gamma$ satisfies the strong Tits alternative: there is $N<\infty$ such that for every finite unital symmetric set $S$ in our group $\Gamma$ either $\langle S \rangle$ is amenable or there is $a,b \in S^N$ that are generators of a nonabelian free group. Then $\Gamma$ has the ninety nine percent property.
\end{prop}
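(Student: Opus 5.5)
The plan is to turn a measure of norm close to $1$ into an almost invariant vector for a large set of group elements. The strong Tits alternative, combined with Kesten's bound, will then force those elements to generate an amenable subgroup. Fix $\varepsilon>0$ and let $N$ be as in the hypothesis. Set $\theta:=(1-\sqrt3/2)/(4N)$ and $\delta:=\varepsilon\theta^2/2$. Let $\mu$ be a probability measure with $\|\lambda(\mu)\|\ge 1-\delta$.

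\emph{Step 1: an almost equivariant pair of vectors.} Pick a unit vector $\xi\in L^2\Gamma$ with $\|\mu*\xi\|\geq 1-\delta$. As in the proof of the first Proposition of the paper, we may take $\xi\ge 0$. Put $\eta:=\mu*\xi/\|\mu*\xi\|$, so that $\sum_g\mu(g)\langle\lambda(g)\xi,\eta\rangle=\langle\lambda(\mu)\xi,\eta\rangle\ge 1-\delta$. Since $\|\lambda(g)\xi-\eta\|^2=2-2\langle\lambda(g)\xi,\eta\rangle$, averaging gives $\sum_g\mu(g)\|\lambda(g)\xi-\eta\|^2\le 2\delta$. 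Let $\Omega:=\{g\in\Gamma:\ \|\lambda(g)\xi-\eta\|\le\theta\}$. By Markov's inequality,
$$\mu(\Gamma\setminus\Omega)\le 2\delta/\theta^2=\varepsilon .$$
Fix $g_0\in\Omega$. For $g\in\Omega$ we have $\|\lambda(g_0^{-1}g)\xi-\xi\|=\|\lambda(g)\xi-\lambda(g_0)\xi\|\le 2\theta$. Hence every element of $\Omega':=g_0^{-1}\Omega\cup\Omega^{-1}g_0\cup\{1\}$ moves $\xi$ by at most $2\theta$. By the triangle inequality, every element of $(\Omega')^N$ moves $\xi$ by at most $2N\theta$.

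\emph{Step 2: amenability via the strong Tits alternative.} Let $S\subseteq\Omega'$ be any finite unital symmetric subset. Suppose $a,b\in S^N$ generate a nonabelian free group, and let $\nu$ be the uniform measure on $a^{\pm1},b^{\pm1}$. Then
$$\langle\lambda(\nu)\xi,\xi\rangle\ge 1-\max_{x\in S^N}\|\lambda(x)\xi-\xi\|\ge 1-2N\theta>\sqrt3/2 .$$
This contradicts Kesten's equality $\|\lambda(\nu)\|=\sqrt{3}/2$ for free generators (the case $s=2$ of the bound $\sqrt{2s-1}/s$ recalled in the preliminaries). So by the hypothesis $\langle S\rangle$ is amenable. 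Now $\langle\Omega'\rangle$ is the directed union of the subgroups $\langle S\rangle$ over finite unital symmetric $S\subseteq\Omega'$, and a directed union of amenable groups is amenable. Hence $A:=\langle\Omega'\rangle$ is amenable.

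\emph{Step 3: conclusion.} We have $\Omega\subseteq g_0A$, so $\mu(g_0A)\ge\mu(\Omega)\ge1-\varepsilon$. This is the ninety-nine percent property with modulus $\delta(\varepsilon)=\varepsilon\theta^2/2$, where $\theta$ depends only on $N$. The main obstacle is Step 1. One has to work with $\mu$ itself rather than with $\widetilde\mu$, so that the resulting set is a \emph{coset} of large $\mu$-mass. This is achieved by using the pair $(\xi,\eta)$ instead of a single almost invariant vector and translating by $g_0$. The rest is bookkeeping: checking that the $N$-fold products stay almost invariant, and passing from finite $S$ to all of $\Omega'$.
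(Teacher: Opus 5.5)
Your proof is correct and follows the paper's argument. The steps match: the almost-equivariant pair $(\xi,\eta)$, Markov's inequality giving a set of mass at least $1-\varepsilon$, translation by $g_0$, and the triangle inequality on $N$-fold products, contradicted by the non-amenability of $\langle a,b\rangle$. Your differences are small improvements: you use Kesten's explicit value $\sqrt3/2$ in place of the paper's Kazhdan constant $\kappa_{F_2}$, and you make explicit the symmetrization and directed-union step that the paper skips (its $S=g_0^{-1}E$ need not be finite or symmetric); one nitpick is that choosing $\xi$ requires $\|\lambda(\mu)\|>1-\delta$, or halving $\delta$, since the norm need not be attained.
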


\begin{proof} Let $\mu$ be a probability measure with $\|\lambda(\mu)\|> 1-\varepsilon$. Then there are unit vectors $\xi, \eta$  in $L^2$  such that $\|\lambda(\mu)\xi - \eta\|\leq \varepsilon$. Then $$\sum_g \mu(g)\|\lambda(g)\xi-\eta\|^2=\sum_g \mu(g)(2-2\langle \lambda(g)\xi,\eta\rangle)=2\langle \eta-\lambda(\mu)\xi, \eta \rangle \leq 2\varepsilon$$
Let $E:=\{g \in \Gamma, \|\lambda(g)\xi-\eta\|^2  \le 2M\varepsilon \}$. Then $\mu(E^c) \leq 1/M$. Pick $g_0 \in E$. Then for any $g \in E$ we have $\|\lambda(g_0)\xi - \lambda(g)\xi\| \leq 2\sqrt{2M \varepsilon}$, so for any $a \in S:=g_0^{-1}E$, $\|\lambda(a) \xi - \xi\|^2\leq 8M  \varepsilon$. Hence $\|\lambda(a) \xi - \xi\|\leq N \sqrt{8M  \varepsilon}$ for all $a \in S^N$. It follows $S$ must generate an amenable subgroup as soon as $N \sqrt{8M  \varepsilon}<\kappa_{F_2}$, where $\kappa_{F_2}$ is the Kazhdan constant on $L^2$ for the generators of the nonabelian free group $F_2$.  So  $E$ is contained in an amenable coset to which $\mu$ gives mass at least $1-1/M$.
\end{proof}

\begin{exem}\label{fgresfree} It is possible to convert the \Cref{resfree} into a finitely generated one. The following construction gives a $3$-generated group $G$ which contains $\Gamma=\bigoplus_{i\ge 1} \F^{(i)}$ and has the property that every pair $a,b \in G$ either commutes or generates a nonabelian free group. Hence $G$ fails the one-percent  but enjoys the ninety-nine percent property. Take $\{h_k, k\ge 1\}$ generators of $\Gamma$, consider new free variables  $u,v,x,y$ and form $s_0:=u$ and $s_k:=v^{-k}uh_kv^k$ for $k \ge 1$ and $t_k:=x^{k+1}y^{k+1}x^{k+1}$ for $k \ge 0$. Consider the free products $A:=\Gamma * F(u,v)$ and $B=F(x,y)$ and set $G$ to be the amalgamated product $G:=A *_{s_k=t_k, k \ge 0} B$. Then clearly $G=\langle x,y,v \rangle$. Moreover $\langle s_k, k\ge 0\rangle$ (resp. $\langle t_k, k\ge 0\rangle$) is malnormal in $A$ (resp. $B$); see Theorem 8 and the proof of its corollary in \cite{karrass-solitar}. This places us in the situation of \cite[Theorem 6]{karrass-solitar}, which asserts that every $2$-generated subgroup of such an amalgam $G$ is either nonabelian free, or conjugates into one of the factors. In the latter case, it is either free or conjugates into $\Gamma$, hence is either free or abelian as desired.
\end{exem}

	\bibliography{mybib.bib}
	\bibliographystyle{plain}
\end{document}